\newcommand{\comments}[1]{}
\newtheorem{theorem}{\bf Theorem}
\newtheorem{definition}{Definition}
\newtheorem{lemma}{Lemma}
\newtheorem{proposition}{Proposition}
\newtheorem{remark}{Remark}
\newtheorem{example}{Example}
\def \CC {\mathbb{C}}
\def\ee{\begin{equation}}
\def\eee{\end{equation}}
\newcommand{\refe}[1]{(\ref{#1})}
\def \RR {\mathbb{R}}
\newcommand{\re}[1]{\mbox{Re}\left(#1\right)}
\newcommand{\im}[1]{\mbox{Im}\left(#1\right)}
\newcommand{\change}[1]{{#1}}
\newcommand{\spectrum}[1]{\mathcal{\sigma}\{#1\}}  
\def \system {\change{\mathscr{P}}}
\def \network {\mathscr{N}}
\def \graph {\mathscr{G}}
\newcommand {\graphset}[2] {\mathcal{G}_{#2}^{#1}}
\DeclareMathOperator{\Ker}{Ker}
\newcommand{\syncregion}[2]{\mathcal{S}_{#1}(#2)}
\newcommand{\intmatrix}[1]{L_{#1}}
\newcommand{\intmatrixred}[1]{\tilde{L}_{#1}}
\newcommand{\CCge}[1]{\CC_{\ge#1}}
\newcommand{\CCle}[1]{\CC_{\le#1}}
\newcommand{\CCg}[1]{\CC_{>#1}}
\newcommand{\CCl}[1]{\CC_{<#1}}
\DeclareMathOperator{\Span}{span}
\newcommand{\winding}[2]{\mbox{Ind}_{#1}\left(#2\right)}
\newcommand{\nyquistregion}[1]{\mathcal{N}_{#1}}
\begin{document}

\title{\bf \Large  
	Output-Feedback Synchronizability of Linear Time-Invariant Systems\footnote{This research was supported by the Natural Sciences and Engineering Research Council of Canada (NSERC).}
}

%
%
\author{Tian Xia and Luca Scardovi\footnote{Tian Xia and Luca Scardovi are with the Department of Electrical and Computer Engineering, University of Toronto, Toronto, ON, M5S 3G4, Canada;  e-mail: \texttt{t.xia@mail.utoronto.ca}; \texttt{scardovi@scg.utoronto.ca}.}}


%
%
%
%
%

\maketitle

\begin{abstract}
The paper studies the output-feedback synchronization problem for a network of identical, linear time-invariant systems. 
A criterion to test network synchronization is derived and the class of output-feedback synchronizable systems is introduced and characterized by sufficient and necessary conditions. In particular it is observed that output-feedback stabilizability is sufficient but not necessary for output-feedback synchronizability. 
In the special case of single-input single-output systems, conditions are derived in the frequency domain.  
The theory is illustrated with several examples.
\end{abstract}


\section{Introduction}

Synchronization has been recently a popular subject in the systems control community. This interest is motivated by the large array of phenomena exhibiting synchronization properties in physics and biology \cite{strogatz01}. 
Moreover, distributed problems arising in engineering applications, are commonly addressed in the context of synchronization theory \cite{fd-fb:13b, 6613520, 7040217,SE3}. 

We consider $N$ identical linear time-invariant (LTI) systems $\system=(A,B,C)$ 
\begin{equation} \begin{split} \label{eq:sysfunc} 
	\dot{x}_i &= A x_i + B u_i, \\
	y_i &= C x_i, \\	
\end{split} \end{equation}
where $x_i \in \mathbb{R}^n$, $u_i\in \mathbb{R}^m$, $y_i \in \mathbb{R}^q$,  $i = 1,\ldots,N$, $N>1$. The collection of systems \eqref{eq:sysfunc} is denoted by $\system^{N}$. The systems are coupled according to the following feedback 
\begin{equation} \begin{split} \label{eq:ctrsta}
	u_i &= K  \sum_{j=1}^{N} {\sigma_{i,j} (y_j - y_i)} , \quad i=1,\ldots,N,
\end{split} \end{equation}
where $K\in \RR^{m \times q}$.  
The problem of static output-feedback synchronization is to determine a matrix gain $K$ and an interconnection topology, defined by the coefficients $\sigma_{i,j} \in \RR$, such that the solutions of \refe{eq:sysfunc}, \refe{eq:ctrsta} asymptotically synchronize, i.e. $\lim_{t \rightarrow \infty} (x_i(t) - x_j(t)) = 0$ for every $i,j$ and every initial conditions. Both existence and design questions are of interest. In this paper we will address the existence question: determine under what conditions on \refe{eq:sysfunc}, a matrix $K$ and a communication topology $\sigma_{i,j}$ exist such that the solutions of \refe{eq:sysfunc}, \refe{eq:ctrsta}  synchronize. We will call this property {\em static output-feedback synchronizability} or, for short, {\em synchronizability}.   
The design problem is subject of ongoing research.

The output-feedback synchronization problem has been addressed in \cite{tuna08} by assuming that $B$ is the identity matrix and in \cite{meng13} by assuming that $C$ is the identity matrix. Both scenarios are particular cases of the general framework considered in this paper.  
In \cite{ma10} the synchronization problem is addressed by assuming that the columns of $B$ are contained in the image of $C^T$. Finally, a number of publications, see e.g., \cite{WeiRen:2008p184, WeiRen:2007je}, study synchronization for specific systems such as double integrators and harmonic oscillators. 

As for the output-feedback stabilization problem, the limitations imposed by static output-feedback can be overcome by using dynamic controllers.  
In \cite{scardovi09} and \cite{li10} it has been shown that, assuming that the interconnection topology satisfies a minimal connectivity requirement, stabilizability and detectability of the isolated systems is sufficient for the existence of a dynamic controller synchronizing the network. In \cite{scardovi09} the solution has been proposed in the case of time-varying communication topologies. Finally, \cite{Wieland} addressed the synchronization problem when the systems composing the network are not identical.

As shown in this paper, stabilizability and detectability are not sufficient for synchronizability. 
We first show that the synchronization problem can be addressed by studying the so called synchronization region (which depends on the structural properties of the uncoupled systems and the controller gain $K$) and the location of the eigenvalues of the interconnection matrix (which must be located inside the synchronization region in order for the network to synchronize). 
A connection between synchronizability and output-feedback stabilizability is established. It is shown that, somehow surprisingly, output-feedback stabilizability of the systems composing the network is a sufficient but not necessary condition for synchronizability. 
The notion of synchronization region and the synchronization criterion are then used to derive a graphical test to check synchronizability in networks of SISO systems. 

The paper is organized as follows. Section \ref{sec:not} introduces
the notation used throughout the paper and reviews preliminary material. Section \ref{sect:syncstactr} formalizes the synchronization problem. Section \ref{sect:syncstactr} and Section \ref{sect:siso_nyquist} present the main results of the paper. We conclude the paper by illustrating the theory with some examples and with some final remarks. Preliminary results related to this paper appeared in \cite{MTNS}.

\section{Preliminaries} \label{sec:not}
\subsection{Notations}
The following notations will be used throughout the paper. 
We denote the open right (left) half complex plane by $\CCg{0}$ ($\CCl{0}$), \change{and} the closed right (left) half complex plane by $\CCge{0}$ ($\CCle{0}$).
We denote by $\mathbf{1}_n$ the column vector in $\CC^n$ containing $1$ in each entry.  
Given a complex matrix $M \in \CC^{n \times m}$, $M^T$ denotes its transpose and $M^{*}$ its conjugate transpose. Given a square matrix $M \in \CC^{n \times n}$, $\spectrum{M}$ denotes its spectrum (defined as the multiset of the eigenvalues of $M$). \change{The matrix $M$ is called Hurwitz if $\spectrum{M} \subseteq \CCl{0}$.} We write $M>0$ ($M\geq0)$ to indicate that $M$ is positive-definite (positive-semidefinite). 
The identity matrix in $\CC^{n \times n}$ is denoted by $I_n$. 

\subsection{Graph theory}

A \emph{directed graph} $\graph$ consists of the triple $\left(\mathcal{V}, \mathcal{E}, \Sigma \right)$, where $\mathcal{V} = \left\{1, 2, \ldots, N \right\}$ is the set of nodes, $\mathcal{E} \subseteq \mathcal{V}\times\mathcal{V}$ is the set of edges and $\Sigma \in \mathbb{R}^{N\times N}$ is a weighted adjacency matrix. Each element $\sigma_{i,j}$ (an element of $\Sigma$) is nonzero if and only if $\left( i,j \right) \in \mathcal{E}$. When $(i,j) \in \mathcal{E}$, node $j$ is called a {\em neighbor} of node $i$. We assume that there are no self-loops and therefore $\sigma_{i,i} = 0$ for $i = 1 \dots N$. Unless differently stated, we allow for negative weights $\sigma_{i,j}$. The set of graphs with the properties above is denoted by $\graphset{N}{}$. Two subsets of $\graphset{N}{}$ are given special notations: $\graphset{N}{+}$ is the subset of graphs with non-negative weights ($\sigma_{i,j} \geq 0$); while $\graphset{N}{u}$ is the subset of graphs characterized by symmetric matrices $\Sigma$. Given a graph $\graph \in \graphset{N}{+}$, a path between two nodes $n_{1},n_{l}$ is a sequence of nodes $\{n_{1},n_{2},\ldots,n_{l}\}$ such that $n_{i},n_{i+1}$ is an edge for $i=1,\ldots l-1$. A node $n_{b}$ is called \emph{reachable} from a node $n_{a}$ if there exists a path between $n_{a}$ and $n_{b}$. A node is {\em globally reachable} if it is reachable from every other node. 
Given a graph $\graph \in \graphset{N}{}$, we define the {\em interconnection matrix} $\intmatrix{\graph}$ as the $N\times N$ matrix with elements
\begin{equation} \begin{split} \label{eq:intmatrix}
	[\intmatrix{\graph}]_{i,j} &:= 
	\begin{cases}
		\sum_{k=1}^{N}{\sigma_{i,k}}, & i=j, \\
		-\sigma_{i,j}, & i \neq j.
	\end{cases}
\end{split} \end{equation}
The matrix $\intmatrix{\graph}$ always contains $0$ and $\mathbf{1}_N$ as an eigenvalue-eigenvector pair (since $\intmatrix{\graph}$ has zero row sum). $\intmatrix{\graph}$ has special properties when the graph $\graph$ belongs to $\graphset{N}{+}$ or $\graphset{N}{u}$.
For graphs in $\graphset{N}{u}$, $\intmatrix{\graph}$ is a symmetric matrix and has therefore real eigenvalues. 
For graphs in  $\graphset{N}{+}$, the associated interconnection matrix $\intmatrix{\graph}$ is called {\em Laplacian matrix}, and it is the generalization of the standard Laplacian matrix defined for undirected graphs (see e.g., \cite{agaev05} and references therein). 
All the eigenvalues of a Laplacian matrix have non-negative real part and the (always present) zero eigenvalue has multiplicity one if and only if the graph \change{contains a globally reachable node} \cite{WeiRen:2005wm}.  
Let $Q_N$ be a matrix belonging to $\RR^{(N-1) \times N}$ and satisfying the following properties
\begin{equation} \label{eq:prop}
	Q_N \mathbf{1}_N = 0,\quad Q_N^T Q_N = \Pi_N,\quad Q_N Q_N^T = I_{N-1}, \\
\end{equation}
where $\Pi_N := I_N - \frac{1}{N} \mathbf{1}_N \mathbf{1}_N^T$ is the \change{projector onto} the subspace orthogonal to $\Span(\mathbf{1}_N)$. Given a graph $\graph$, the \emph{reduced interconnection matrix} is defined by
\begin{equation} \begin{split} \label{eq:intmatrixred}
	\intmatrixred{\graph} &:= Q_N \intmatrix{\graph} Q_N^T. \\
\end{split} \end{equation}
where $Q_N \in \RR^{(N-1) \times N}$ and satisfies the properties \refe{eq:prop}. The spectrum of $\tilde L_{\graph}$ is the spectrum of $L_{\graph}$ with one instance of the zero eigenvalue removed, i.e. $\spectrum{\intmatrixred{\graph}} = \spectrum{\intmatrix{\graph}} \setminus \{0\}$  \cite{young}. Therefore, when $\graph \in \graphset{N}{+}$ \change{contains a globally reachable node}, $\spectrum{\intmatrixred{\graph}} \subset \CCg{0}$.
These properties are invariant to the choice of $Q_N$  \cite{young}.

\section{Synchronization criterion and synchronizability} \label{sect:syncstactr} 

We represent the network coupling structure with a directed graph. For this purpose we introduce $N$ nodes labeled consecutively from $1$ to $N$. Each node represents a system in the network. If a coefficient $\sigma_{i,j}=0$ then the edge connecting node $i$ to node $j$ is not present. If $\sigma_{i,j}\neq 0$ the relative edge exists and its weight is determined by the (possibly negative) coefficient ${\sigma_{i,j}}$. We call the resulting graph $\graph$ the \emph{communication topology}. A collection of systems \refe{eq:sysfunc} together with a feedback matrix $K\in \RR^{m \times q}$ and a communication topology $\graph$ form a {\em network} that will be denoted by $\network:=(\system^N, K, \graph)$.   
The next definition formalizes the notion of network synchronization.
\begin{definition}
	A network $\network = (\system^{N}, K, \graph)$, is said to \emph{synchronize} if 
	\[
	\lim_{t \rightarrow \infty} (x_i(t) - x_j(t)) = 0, 
	\]
for $i,j = 1, 2, \dots, N$ and for all initial conditions. 
\end{definition}

Network synchronization depends on the structural properties of the system $\system$, on the graph $\graph$ and on the choice of the matrix $K\in \RR^{m \times q}$. In this paper we investigate the structural properties of $\system$ such that $(\system^N, K, \graph)$ synchronizes for some $K$ and $\graph$.
%
%


\begin{definition}[Synchronizability]
A collection of systems $\system^{N}$ is \emph{output-feedback synchronizable} (OFS) if there exist a matrix $K\in \RR^{m \times q}$ and a graph $\graph \in \graphset{N}{}$ such that the network $(\system^{N}, K, \graph)$ synchronizes.
\end{definition}





\noindent We will make use of the following notion of \emph{synchronization region}. 

\begin{definition} \label{defn:syncregion_sta}
	Given a system $\system=(A,B,C)$ and a matrix $K\in \RR^{m \times q}$, the \emph{synchronization region} $\syncregion{\system}{K}$ is the subset of the complex plane defined by
	\begin{equation} \label{eq:sync}\begin{split}
		\syncregion{\system}{K} := \{ s \in \mathbb{C} \bigm| A - sBKC \mbox{ is Hurwitz} \}. \\
	\end{split} \end{equation}
\end{definition}
The term synchronization region is justified by the synchronization criterion presented below. 

\begin{theorem} \label{thm:synccritsta}
A network $\network=(\system^{N}, K, \graph)$ synchronizes if and only if $\spectrum{\intmatrixred{\graph}} \subseteq \syncregion{\system}{K}$. 
	
\end{theorem}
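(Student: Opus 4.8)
The plan is to diagonalize the network dynamics along the directions transverse to the synchronization subspace $\Span(\mathbf{1}_N)$. Stacking the states as $x = (x_1^T, \dots, x_N^T)^T$, the closed loop \eqref{eq:sysfunc}--\eqref{eq:ctrsta} reads $\dot x = (I_N \otimes A - L_{\graph} \otimes BKC)\, x$. Synchronization is equivalent to asymptotic stability of the "disagreement" coordinates $\xi := (Q_N \otimes I_n)\, x$; indeed $x_i - x_j \to 0$ for all $i,j$ is equivalent to $(\Pi_N \otimes I_n)x \to 0$, and since $Q_N^T Q_N = \Pi_N$, $Q_N Q_N^T = I_{N-1}$ (from \eqref{eq:prop}), this holds if and only if $\xi \to 0$. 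Using $Q_N \mathbf{1}_N = 0$ one checks that $\xi$ is invariant: $\dot\xi = (I_{N-1}\otimes A - (Q_N L_{\graph} Q_N^T) \otimes BKC)\,\xi = (I_{N-1}\otimes A - \tilde L_{\graph}\otimes BKC)\,\xi$. So the first step is to reduce the problem to: the matrix $M := I_{N-1}\otimes A - \tilde L_{\graph}\otimes BKC$ is Hurwitz.

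The second step is to relate the spectrum of $M$ to the synchronization region. Put $\tilde L_{\graph}$ in (complex) Jordan form $\tilde L_{\graph} = T J T^{-1}$ with eigenvalues $\lambda_1, \dots, \lambda_{N-1}$ on the diagonal of $J$. Then $(T^{-1}\otimes I_n) M (T\otimes I_n) = I_{N-1}\otimes A - J\otimes BKC$ is block upper triangular with diagonal blocks $A - \lambda_k BKC$, $k = 1,\dots,N-1$. Hence $\spectrum{M} = \bigcup_{k} \spectrum{A - \lambda_k BKC}$, so $M$ is Hurwitz if and only if $A - \lambda_k BKC$ is Hurwitz for every $k$, i.e. if and only if $\lambda_k \in \syncregion{\system}{K}$ for every $k$, i.e. if and only if $\spectrum{\tilde L_{\graph}} \subseteq \syncregion{\system}{K}$.

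The one subtlety — and the step I expect to need the most care — is the "only if" direction together with the handling of real versus complex data: $L_{\graph}$ is real but its eigenvalues (and those of $\tilde L_{\graph}$) may be complex, and $Q_N$ is real, so $\tilde L_{\graph}$ is a real matrix whose spectrum is symmetric about the real axis. This is harmless because Hurwitzness of $A - sBKC$ for real $A,B,K,C$ is automatically a conjugation-symmetric condition in $s$ (the characteristic polynomial has real coefficients when $s$ is real, and $\overline{A - \bar s BKC} = A - sBKC$), so $\syncregion{\system}{K}$ is itself symmetric about the real axis and no inconsistency arises when we pass to the complexification. For the "only if" part, one must argue that if some $\lambda_k \notin \syncregion{\system}{K}$ then $M$ has an eigenvalue in $\CCge{0}$, producing a non-decaying solution $\xi(t)$, hence a non-decaying disagreement $x(t)$; the block-triangular form above makes $\spectrum{M}$ exactly the union of the $\spectrum{A-\lambda_k BKC}$, so an eigenvalue of some diagonal block with nonnegative real part is genuinely an eigenvalue of $M$, and choosing a corresponding (possibly complex) eigenvector and taking real/imaginary parts yields a real initial condition for which $\xi(t)\not\to 0$. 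Assembling these observations gives the equivalence; I would finish by explicitly noting that the invariance and the stability reduction use only \eqref{eq:prop} and $Q_N\mathbf{1}_N=0$, and that the spectral identity $\spectrum{\tilde L_{\graph}} = \spectrum{L_{\graph}}\setminus\{0\}$ quoted in the Preliminaries is what connects this to the communication topology.
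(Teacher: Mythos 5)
Your proposal is correct and follows essentially the same route as the paper: reduce synchronization to asymptotic stability of the transversal coordinates $\xi=(Q_N\otimes I_n)x$ using the properties \eqref{eq:prop}, then triangularize $\intmatrixred{\graph}$ (Jordan form) so that Hurwitzness of $I_{N-1}\otimes A-\intmatrixred{\graph}\otimes BKC$ reduces to Hurwitzness of the diagonal blocks $A-\lambda_k BKC$, i.e.\ to $\spectrum{\intmatrixred{\graph}}\subseteq\syncregion{\system}{K}$. One tiny correction: the autonomy of the $\xi$-dynamics (the identity $Q_N\intmatrix{\graph}=\intmatrixred{\graph}Q_N$) relies on the zero row sum $\intmatrix{\graph}\mathbf{1}_N=0$ together with \eqref{eq:prop}, not merely on $Q_N\mathbf{1}_N=0$.
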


\begin{proof}
Define $x = [x_1^T, \ldots, x_N^T]^T$ and rewrite \refe{eq:sysfunc}, \refe{eq:ctrsta} in compact form as
\begin{equation*} \begin{split} 
	\dot{x} &= (I_N \otimes A - \intmatrix{\graph} \otimes BKC) x.
\end{split} \end{equation*}
Let   
$\mathcal{X}_{\parallel}:=\{x\in \RR^{nN} \bigm| \left(\Pi_{N} \otimes I_{n}\right) x = 0\}$ be the synchronization subspace and $\mathcal{X}_{\perp}:=\{x\in \RR^{nN} \bigm| \left(\frac1 N 1_{N}1_{N}^{T}\otimes I_{n}\right)x = 0\}$ its orthogonal complement, called the transversal subspace. The network synchronizes if and only if, for any initial conditions, the projection of the state \change{onto} the transversal subspace converges to zero asymptotically.   

Let $Q_{N}$ be a $(N-1)\times N$ real matrix satisfying \eqref{eq:prop} and define the new set of coordinates $x_{\perp} := (Q_N \otimes I_n) x$ and $x_{\parallel} := \frac{1}{N} (\mathbf{1}_N^T \otimes I_n) x$. Notice that $(Q_N \otimes I_n)$ is a partial isometry. In fact, it is an isometry on the orthogonal complement of its kernel (the transversal subspace), as $\|(Q_N \otimes I_n)x\| = \|{x}\|$ when $x \in \mathcal{X}_{\perp}$. This implies that synchronization is equivalent to asymptotic stability of the origin of     
\begin{equation*} \begin{split} 
	\dot{x}_{\perp} &= (I_{N-1} \otimes A - \intmatrixred{\graph} \otimes BKC) x_{\perp}, 
\end{split} \end{equation*}
which, \change{in turn}, is equivalent to
$(I_{N-1} \otimes A - \intmatrixred{\graph} \otimes BKC)$ being Hurwitz. 

\change{By using the Jordan decomposition, we can write $P^{-1} \intmatrixred{\graph} P = \Lambda$, where $P \in \CC^{n \times n}$ is an invertible matrix and $\Lambda$ is an upper-triangular matrix with the same eigenvalues of $\intmatrixred{\graph}$ as its diagonal components. By using the properties of the Kronecker product and the Jordan decomposition of $\intmatrixred{\graph}$  we obtain
\begin{align*}
	(I_{N-1} \otimes A - \intmatrixred{\graph} \otimes BKC) = (P \otimes I_n)(I_{N-1} \otimes A - \Lambda \otimes BKC){(P \otimes I_n)}^{-1}.
\end{align*}
Notice that the matrix $(I_{N-1} \otimes A - \Lambda \otimes BKC)$ is complex and block upper-triangular with one diagonal block $A - \lambda_i BKC$ for each (possibly complex) eigenvalue $\lambda_i$, $i=1,2,\ldots,n-1$, of $\intmatrixred{\graph}$. Therefore, $(I_{N-1} \otimes A - \intmatrixred{\graph} \otimes BKC)$ is Hurwitz if and only if all diagonal blocks $A - \lambda_i BKC$ are Hurwitz.}
This is equivalent to the condition $\spectrum{\intmatrixred{\graph}} \subseteq \syncregion{\system}{K}$.
\end{proof}

Some comments on the synchronization region $\syncregion{\system}{K}$ are now in place. It is clear from \refe{eq:sync} that the synchronization region depends only on system $\system$ and the matrix gain $K$. Therefore it does not depend from the interconnection topology. The synchronization region is an open set and, since the eigenvalues of $A - sBKC$ and $A - s^* BKC$ are complex conjugated, it is symmetric with respect to the real axis. 

According to Theorem \ref{thm:synccritsta}, once the gain $K$ has been fixed, the synchronization region defines the subset of the complex plane where the eigenvalues of the interconnection matrix must be located in order for the network to synchronize. The synchronization region, therefore, provides information about the interconnection topologies required to achieve synchronization. For example, if the synchronization region does not intersect the real axis, the communication topology must necessarily be a non symmetric directed graph in order for the network to synchronize\footnote{This follows from the fact that the interconnection matrix of an undirected graph has all the eigenvalues on the real axis.}.  


The choice of not restricting the communication topology to the graphs with non negative weights $\graphset{N}{+}$, is justified by the next example, where it is shown that there are collections of systems that cannot be synchronized unless the interconnection topology contains both positive and negative weights.


\begin{example} \label{ex:graph_posneg}
	Consider the system $\system = (A,B,C)$ where 
	\begin{equation*} 
		A =
		\begin{bmatrix}
			0		& 1		& 0 	& 0		\\
			0		& 0		& 1		& 0		\\
			0		& 0		& 0		& 1		\\
			1		& 2		& 0		& -2	\\
		\end{bmatrix} \hspace{-1.5mm},
		 \quad
		B =
		\begin{bmatrix}
			0		\\
			0		\\
			0		\\
			1		\\
		\end{bmatrix}\hspace{-1.5mm}, \quad 
		C = 
		\begin{bmatrix}
			0		& 1		& 1.5	& 1		\\
		\end{bmatrix}\hspace{-1.5mm}.
	\end{equation*}
	The synchronization region associated to the gain $K = 1$ is illustrated  in Fig. \ref{fig:ex2syncregion} (shaded region). According to Proposition 1 in \cite{agaev05}, the spectrum of any $N$-dimensional Laplacian matrix is contained in the set
	\[
	\left\{ s= \delta+i\omega \bigm| \| \omega \| \leq \| \delta \| \cot \frac{\pi}{N}, \; \delta \geq 0 \right\}. 
	\]
	In the case of $N=3$ this set corresponds to the dashed region in Fig. \ref{fig:ex2syncregion}.
	Since $\spectrum{\intmatrixred{\graph}}$ is disjoint from $\syncregion{\system}{1}$, any network $(\system^{3}, 1, \graph)$ does not achieve synchronization whenever the graph $\graph \in \graphset{3}{+}$. Moreover, since the system is SISO, the same conclusion holds for any choice of the control gain.
	However, as we will show later (see Lemma \ref{lem:stasyncable}), the collection of systems is output-feedback synchronizable and, therefore, there exist a gain $K$ and a graph $\graph \in \graphset{3}{}$ such that the resulting network synchronizes. 
	 
	\begin{figure}
		\centering
		\includegraphics[width=6cm]{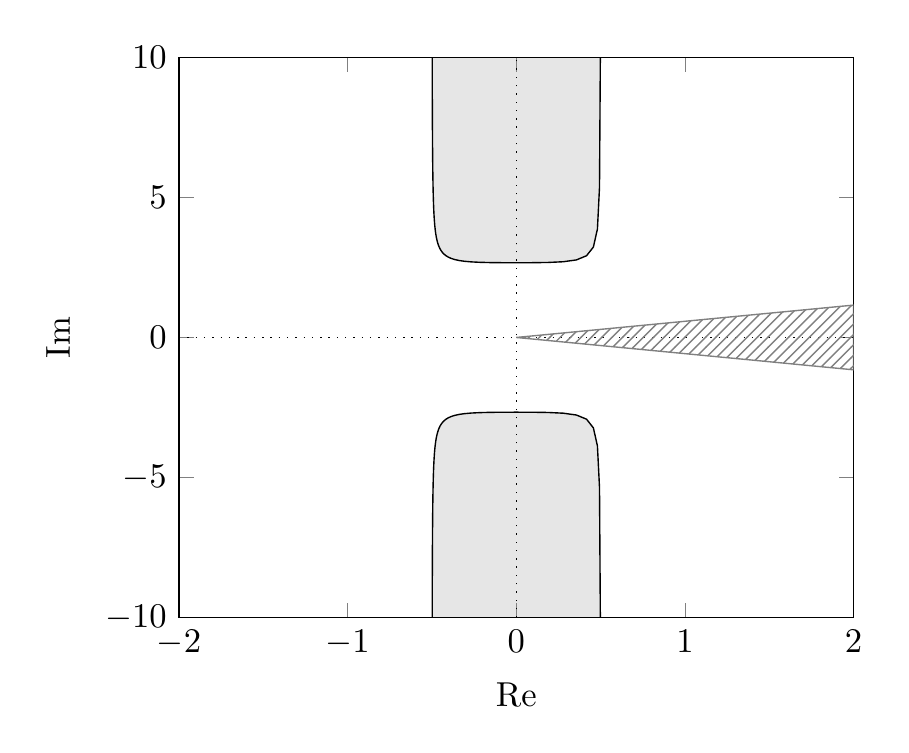}
		\caption{Synchronization region and eigenvalues region for any graph in $\graphset{3}{+}$ (Example \ref{ex:graph_posneg})}
		\label{fig:ex2syncregion}
		\vspace {-0.2 cm}
	\end{figure}
\end{example}




According to Theorem \ref{thm:synccritsta}, the existence of a non-empty synchronization region is a necessary condition for \change{output-feedback} synchronizability.
The next result further characterizes the relationship between \change{output-feedback} synchronizability and the properties of the systems composing the network. It is shown that i) stabilizability and detectability of $\system$ are necessary conditions for \change{output-feedback} synchronizability of $\system^{N}$ for any $N$; ii) the synchronization region (and therefore \change{output-feedback} synchronizability) depends only on the controllable and observable \change{subsystem} of $\system$. 

\begin{theorem} \label{thm:ctrobs_subsys}
Let $\tilde{\system}$ be the controllable and observable \change{subsystem} of $\system$. The synchronization region  
	\begin{equation*}
		\syncregion{\system}{K} =
		\begin{cases}
			\syncregion{\tilde{\system}}{K}, & \mbox{if } \system \mbox{ is stabilizable and detectable,} \\
			\emptyset , & \mbox{otherwise}.
		\end{cases}
	\end{equation*}
\end{theorem}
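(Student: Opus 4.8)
The plan is to use a Kalman-type decomposition of $\system=(A,B,C)$ that isolates the controllable-and-observable part $\tilde\system=(\tilde A,\tilde B,\tilde C)$, then track how the spectrum of $A-sBKC$ splits along that decomposition. Recall that there exists an invertible $T$ such that, in the new coordinates,
\begin{equation*}
	T^{-1}AT=\begin{bmatrix} A_{11} & 0 & A_{13} & 0 \\ A_{21} & A_{22} & A_{23} & A_{24} \\ 0 & 0 & A_{33} & 0 \\ 0 & 0 & A_{43} & A_{44} \end{bmatrix},\quad
	T^{-1}B=\begin{bmatrix} 0 \\ B_2 \\ 0 \\ B_4 \end{bmatrix},\quad
	CT=\begin{bmatrix} 0 & 0 & C_3 & 0 \end{bmatrix},
\end{equation*}
where the block indexed by $33$ is $(\tilde A,\tilde B,\tilde C)=(A_{33},B_2|_{\ldots},C_3)$ — more precisely, reorder so that the controllable-and-observable subsystem occupies one diagonal block; the remaining three diagonal blocks are $A_{\bar c}$ (uncontrollable modes, possibly observable or not) collected appropriately. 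The key structural fact is that $BKC$ has nonzero blocks only in the "column" under $\tilde B$ and the "row" under $\tilde C$, so that $T^{-1}(A-sBKC)T$ is block-triangular with respect to the coarser partition (controllable-and-observable block versus the rest), and its diagonal blocks are exactly $\tilde A-s\tilde B K\tilde C$ together with the fixed (uncontrollable/unobservable) subblocks $A_{11},A_{22},A_{44}$ (or whatever the complementary diagonal blocks are), none of which depend on $s$.

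From this the two cases fall out. First, $\spectrum{A-sBKC}$ is the union of $\spectrum{\tilde A-s\tilde B K\tilde C}$ with a fixed set of "uncoupled" eigenvalues equal to the eigenvalues of the uncontrollable modes together with the unobservable modes (the $A$-invariant subspaces that $BKC$ cannot touch). Consequently $A-sBKC$ is Hurwitz for some $s$ only if all those fixed eigenvalues lie in $\CCl{0}$; the uncontrollable modes having negative real part is precisely stabilizability, and the unobservable modes having negative real part is precisely detectability. Hence if $\system$ is not stabilizable-and-detectable, no $s$ works and $\syncregion{\system}{K}=\emptyset$. Conversely, if $\system$ is stabilizable and detectable, those fixed eigenvalues are already in $\CCl{0}$, so $A-sBKC$ is Hurwitz if and only if the remaining diagonal block $\tilde A-s\tilde B K\tilde C$ is Hurwitz, i.e. if and only if $s\in\syncregion{\tilde\system}{K}$; this gives $\syncregion{\system}{K}=\syncregion{\tilde\system}{K}$.

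The main obstacle is purely bookkeeping: setting up the four-block Kalman decomposition cleanly, identifying exactly which diagonal blocks are "the rest," and verifying that $BKC$ — not just $B$ and $C$ separately — maps into and out of only the controllable-and-observable block so that the relevant triangular structure is preserved for every $s$. Once the zero pattern of $T^{-1}BKC\,T$ is pinned down (it inherits zeros from the zero rows of $T^{-1}B$ and the zero columns of $CT$), the Hurwitz analysis is immediate from the block-triangular eigenvalue argument already used in the proof of Theorem~\ref{thm:synccritsta}. One should also remark that the eigenvalues $A-\lambda BKC$ contributed by the complementary blocks are independent of $\lambda$, which is what makes "for some $K$ and $\graph$" collapse to a condition on the uncontrollable/unobservable spectra alone and justifies claim (i) about necessity of stabilizability and detectability for all $N$.
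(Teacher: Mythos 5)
Your proposal follows the same route as the paper's proof: pass to a Kalman decomposition, observe that the perturbation $sBKC$ enters only the controllable-and-observable diagonal block while the remaining diagonal blocks are fixed and independent of $s$, and then split on whether those fixed blocks (the uncontrollable and unobservable modes) are Hurwitz, i.e.\ on stabilizability and detectability. The conceptual argument is right, but the decomposition you actually display is internally inconsistent and cannot serve as the literal basis for it: with $CT=[\,0\ \ 0\ \ C_3\ \ 0\,]$, the third block of $T^{-1}B$ equal to zero, and the third block-row of $T^{-1}AT$ equal to $[\,0\ \ 0\ \ A_{33}\ \ 0\,]$, the only state the output reads evolves autonomously and receives no input, so in that frame the controllable-and-observable subsystem is empty (the transfer function would vanish identically), and the claimed triangular structure does not even arise. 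The standard ordering repairs this and is what the paper uses: ordering the blocks as (controllable/unobservable, controllable/observable, uncontrollable/unobservable, uncontrollable/observable) gives $T^{-1}B=[\,B_1^T\ \ B_2^T\ \ 0\ \ 0\,]^T$ and $CT=[\,0\ \ C_2\ \ 0\ \ C_4\,]$, so $T^{-1}BKC\,T$ is nonzero only in block positions $(1,2),(1,4),(2,2),(2,4)$, all on or above the diagonal; hence $T^{-1}(A-sBKC)T$ is block upper triangular for every $s$, with diagonal blocks $A_{11}$, $A_{22}-sB_2KC_2$, $A_{33}$, $A_{44}$. With that bookkeeping corrected, the remainder of your argument --- the fixed diagonal spectra are exactly the uncontrollable and unobservable modes, their lying in $\CCl{0}$ is exactly stabilizability and detectability, otherwise $\syncregion{\system}{K}=\emptyset$, and in the stabilizable-and-detectable case $\syncregion{\system}{K}=\syncregion{\tilde{\system}}{K}$ --- coincides with the paper's proof.
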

\begin{proof}
	By expressing $\system = (A, B, C)$ in the canonical Kalman form, the matrix $A - sBKC$ can be written as 
		\begin{equation*}
			\begin{bmatrix}
			A_{1,1}							& A_{1,2} - sB_1KC_2	& A_{1,3}							& A_{1,4} - sB_1KC_4	\\
			0										& A_{2,2} - sB_2KC_2	& A_{2,3}							& A_{2,4} - sB_2KC_4	\\
			0										& 0										& A_{3,3}							& A_{3,4}							\\
			0										& 0										& 0										& A_{4,4}							\\
			\end{bmatrix},
		\end{equation*}
	where $A_{1,1}$ is the controllable/unobservable part, $A_{2,2}$ is the controllable/observable part, $A_{3,3}$ the uncontrollable/unobservable part, $A_{4,4}$ the uncontrollable/observable part, and $B =[B_1^T, B_2^T, 0, 0]^T$.
	\change{If} $\system$ is stabilizable and detectable, $A_{1,1}$, $A_{3,3}$, and $A_{4,4}$ are Hurwitz and $\syncregion{\system}{K} = \{s\in\CC \bigm| \spectrum{A_{2,2} - sB_2 K C_2} \subseteq \CCl{0}\} = \syncregion{\tilde{\system}}{K}$. Otherwise, $\syncregion{\system}{K} = \emptyset$.
\end{proof}

The next result is a first characterization of output-feedback synchronizability.

\begin{lemma} \label{lem:stasyncable}
Given a LTI system $\system$, the following facts hold true
	\begin{enumerate}
		\item[i)] If $N$ is even, $\system^{N}$ is OFS if and only if $\left(\syncregion{\system}{K} \cap \RR\right) \neq \emptyset$ for some $K$.
		\item[ii)] If $N$ is odd, $\system^{N}$ is OFS if and only if $\syncregion{\system}{K} \neq \emptyset$ for some $K$.
	\end{enumerate}
\end{lemma}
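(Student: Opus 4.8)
The plan is to use Theorem~\ref{thm:synccritsta} as the bridge: $\system^N$ is OFS precisely when there exist $K$ and a graph $\graph\in\graphset{N}{}$ with $\spectrum{\intmatrixred{\graph}}\subseteq\syncregion{\system}{K}$. So the question reduces to which multisets of $N-1$ complex numbers can be realized as $\spectrum{\intmatrixred{\graph}}$ for some $\graph\in\graphset{N}{}$, and whether such a multiset can be packed into a (nonempty) synchronization region. The key structural fact I would isolate first is: since we allow arbitrary real weights, $\intmatrix{\graph}$ ranges over \emph{all} real $N\times N$ matrices with zero row sums, and hence $\intmatrixred{\graph}=Q_N\intmatrix{\graph}Q_N^T$ ranges over \emph{all} real $(N-1)\times(N-1)$ matrices (given any target $\tilde M\in\RR^{(N-1)\times(N-1)}$, set $\intmatrix{\graph}=Q_N^T\tilde M Q_N$, which has zero row sums because $\mathbf{1}_N^T Q_N^T=0$... wait, one needs zero row sums, i.e. $\intmatrix{\graph}\mathbf{1}_N=0$: indeed $Q_N^T\tilde M Q_N\mathbf{1}_N=0$ since $Q_N\mathbf{1}_N=0$). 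Therefore $\spectrum{\intmatrixred{\graph}}$ can be \emph{any} multiset of $N-1$ complex numbers that is closed under conjugation (being the spectrum of a real matrix).

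Next I would reduce to a packing question. For part ii) ($N$ odd, so $N-1$ even): if $\syncregion{\system}{K}\neq\emptyset$, pick any $s_0\in\syncregion{\system}{K}$; by the conjugate symmetry of the synchronization region noted after Theorem~\ref{thm:ctrobs_subsys}, $s_0^*\in\syncregion{\system}{K}$ as well. If $s_0$ is real, take all $N-1$ eigenvalues equal to $s_0$; otherwise take $(N-1)/2$ copies of $s_0$ and $(N-1)/2$ copies of $s_0^*$ — a conjugation-closed multiset, hence realizable by a real $\intmatrixred{\graph}$, and contained in $\syncregion{\system}{K}$. By Theorem~\ref{thm:synccritsta} the network synchronizes, so $\system^N$ is OFS. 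Conversely, OFS forces $\spectrum{\intmatrixred{\graph}}\subseteq\syncregion{\system}{K}$ for some $K,\graph$, so in particular $\syncregion{\system}{K}\neq\emptyset$. For part i) ($N$ even, so $N-1$ odd): a conjugation-closed multiset of odd cardinality must contain at least one real element, so if $\spectrum{\intmatrixred{\graph}}\subseteq\syncregion{\system}{K}$ then $\syncregion{\system}{K}\cap\RR\neq\emptyset$ — this gives necessity. For sufficiency, if $s_0\in\syncregion{\system}{K}\cap\RR$, take all $N-1$ eigenvalues equal to $s_0$ (a real, hence conjugation-closed, multiset), realize it, and apply Theorem~\ref{thm:synccritsta}.

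The main subtlety — and the step I expect to need the most care — is justifying that $\intmatrixred{\graph}$ really can be an \emph{arbitrary} real $(N-1)\times(N-1)$ matrix, i.e. that the map $\graph\mapsto\intmatrixred{\graph}$ is onto $\RR^{(N-1)\times(N-1)}$. One must check that $\intmatrix{\graph}:=Q_N^T\tilde M Q_N$ is a legitimate interconnection matrix in the sense of \eqref{eq:intmatrix}: it has zero row sums (shown above), and its off-diagonal entries are then free to be any reals (defining $\sigma_{i,j}:=-[\intmatrix{\graph}]_{i,j}$ for $i\neq j$ and $\sigma_{i,i}:=0$), so it arises from a genuine graph in $\graphset{N}{}$; finally $Q_N\intmatrix{\graph}Q_N^T=Q_NQ_N^T\tilde M Q_NQ_N^T=\tilde M$ using $Q_NQ_N^T=I_{N-1}$. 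I would also remark that the converse direction only uses the necessary-condition half of this (every realizable spectrum is conjugation-closed), which is automatic. One should note that Example~\ref{ex:graph_posneg} is precisely an instance where $\syncregion{\system}{K}\cap\RR\neq\emptyset$ is needed and general real weights are essential, tying the lemma back to the earlier discussion. Closing the argument is then a direct application of Theorem~\ref{thm:synccritsta} in both directions.
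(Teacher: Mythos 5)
Your proof is correct, and its skeleton matches the paper's: reduce everything to Theorem~\ref{thm:synccritsta}, use the conjugate symmetry of $\syncregion{\system}{K}$, obtain necessity from the parity observation that a real $(N-1)\times(N-1)$ matrix must have a real eigenvalue when $N$ is even, and prove sufficiency by exhibiting a graph whose reduced spectrum sits inside the synchronization region. Where you genuinely differ is the realization step. You isolate the stronger, reusable fact that $\graph\mapsto\intmatrixred{\graph}$ is onto all of $\RR^{(N-1)\times(N-1)}$, by setting $\intmatrix{\graph}:=Q_N^T\tilde{M}Q_N$ (zero row sums since $Q_N\mathbf{1}_N=0$, weights read off as $\sigma_{i,j}=-[\intmatrix{\graph}]_{i,j}$, and $Q_N\intmatrix{\graph}Q_N^T=\tilde{M}$ by $Q_NQ_N^T=I_{N-1}$), so that the achievable reduced spectra are exactly the conjugation-closed multisets of size $N-1$. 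The paper instead builds explicit interconnection matrices: the uniformly weighted complete graph $\sigma_{i,j}=p/N$, whose reduced spectrum is $\{p,\dots,p\}$, for a real point $p$ of the region, and $L=P\,\mathrm{diag}(0,R,\dots,R)\,P^{-1}$ with $R$ the $2\times 2$ real block realizing $p,p^{*}$ and $\mathbf{1}_N$ as the first column of $P$, for a complex point, then defines the weights from $L$ exactly as you do. Your formulation buys a clean characterization of all realizable spectra (which also makes the parity argument transparent), at the cost of invoking the standard fact that every conjugation-closed multiset is the spectrum of some real matrix; to be fully self-contained you should realize it explicitly (companion matrix of the associated real polynomial, or a real block-diagonal matrix), which is essentially what the paper's $\mathrm{diag}(0,R,\dots,R)$ construction does. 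Apart from making that one realization step explicit, your argument is complete.
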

\begin{proof}
	i): (Sufficiency) By assumption there exists a matrix $K$ such that $\left(\syncregion{\system}{K} \cap \RR\right) \neq \emptyset$.
	Let $p \in \left(\syncregion{\system}{K} \cap \RR\right)$. The graph $\graph$ with weights $\sigma_{i,j} = p/N$, $i,j =1,\ldots,N$, satisfies $\spectrum{\intmatrixred{\graph}} = \{p, \dots, p\} \subseteq \syncregion{\system}{K}$ and therefore $\system^{N}$ is \change{OFS}. \\
	(Necessity)
	Since $\system^N$ is \change{OFS}, there exist $K$ and $\graph \in \graphset{N}{}$ such that $(\system^N, K, \graph)$ synchronizes and, by Theorem \ref{thm:synccritsta}, $\spectrum{\intmatrixred{\graph}} \subseteq \syncregion{\system}{K}$. Since $N$ is an even natural number, the reduced interconnection matrix $\intmatrixred{\graph}$ has at least one eigenvalue on the real axis. Since $\spectrum{\intmatrixred{\graph}} \subseteq \syncregion{\system}{K}$ we conclude that $\syncregion{\system}{K} \cap \RR \neq \emptyset$.\\
	ii): (Sufficiency)
	Let $p \in \syncregion{\system}{K}$. By symmetry of $\syncregion{\system}{K}$ with respect to the real axis, $p^* \in \syncregion{\system}{K}$. In order to show that the network is \change{OFS}, it is sufficient to show that there exists a graph $\graph \in \graphset{N}{}$, $N$ odd,  associated with an interconnection matrix $\intmatrix{\graph}$ with spectrum $\spectrum{\intmatrix{\graph}} = \{0,\underbrace{p, p^*,\ldots,p, p^*}_{N-1}\}$.     
	Define $L$ as
	\begin{equation*} \begin{aligned}
		L &= P
		\begin{bmatrix}
			0	& ~		& ~ 	& 0		\\
			~	& R		& ~		& ~		\\
			~	& ~		& \ddots	&	~	\\
			0	& ~		& ~ 	& R
		\end{bmatrix}
		P^{-1},& 
		R &=
		\begin{bmatrix}
			\re{p}		& -\im{p}	\\
			\im{p}		& \re{p}	
		\end{bmatrix}, \\
	\end{aligned} \end{equation*}
	where $P$ is any invertible real matrix for which $\mathbf{1}_N$ is the first column. Since $L \mathbf{1}_N = 0$, by using \refe{eq:intmatrix}, we can choose the adjacency matrix coefficients as  $\sigma_{i,j} = -l_{i,j}$, for any $i\neq j$, and $\sigma_{i,i}=0$ for any $i$. This defines a graph $\graph$ and the interconnection matrix $L = \intmatrix{\graph}$. We conclude that, by construction, $\spectrum{\intmatrixred{\graph}} = \{\underbrace{p, p^*,\ldots,p, p^*}_{N-1}\} \subseteq \syncregion{\system}{K}$ and therefore $\system^{N}$ is \change{OFS}.\\
	(Necessity) Follows the same lines of the proof of i) (necessity). 	
\end{proof}

Lemma \ref{lem:stasyncable} can be used to relate \change{output-feedback} synchronizability of $\system^{N}$ to the structural properties of $\system$. 
In the next result we show that, in the case of even collections, \change{output-feedback} synchronizability of $\system^{N}$ is equivalent to output-feedback stabilizability\footnote{A system $\system = (A, B, C)$ is output-feedback stabilizable if there exists $K$ such that $A - BKC$ is Hurwitz} of $\system$. We remark here that finding a characterization of output-feedback stabilizability is an open problem \cite{Syrmos:1997hz}. 


\begin{theorem} \label{thm:stasyncableeven}
If $N$ is even, $\system^{N}$ is OFS if and only if the system $\system$ is output-feedback stabilizable.
\end{theorem}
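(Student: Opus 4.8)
The plan is to reduce the statement to Lemma \ref{lem:stasyncable}(i) and then exploit the scaling structure of the synchronization region. By Lemma \ref{lem:stasyncable}(i), for even $N$ the collection $\system^{N}$ is OFS if and only if there exists a gain $K$ such that $\syncregion{\system}{K} \cap \RR \neq \emptyset$; by Definition \ref{defn:syncregion_sta} this means precisely that $A - s_0 BKC$ is Hurwitz for some real scalar $s_0$ and some $K$. The theorem will then follow by showing that this condition is equivalent to output-feedback stabilizability of $\system$, i.e. to the existence of $K'$ with $A - BK'C$ Hurwitz.

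For the ``if'' direction, suppose $\system$ is output-feedback stabilizable, so that $A - BKC$ is Hurwitz for some $K$. Then $s_0 = 1$ lies in $\syncregion{\system}{K} \cap \RR$, which is therefore nonempty, and Lemma \ref{lem:stasyncable}(i) gives that $\system^{N}$ is OFS.

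For the ``only if'' direction, suppose $\system^{N}$ is OFS. By Lemma \ref{lem:stasyncable}(i) there exist $K$ and a real scalar $s_0$ with $A - s_0 BKC$ Hurwitz. If $s_0 \neq 0$, set $K' := s_0 K$; then $A - BK'C = A - s_0 BKC$ is Hurwitz, so $\system$ is output-feedback stabilizable. If $s_0 = 0$, then $A$ itself is Hurwitz, and the zero gain $K' = 0$ makes $A - BK'C = A$ Hurwitz, so again $\system$ is output-feedback stabilizable.

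There is essentially no hard step here: the result is a direct corollary of Lemma \ref{lem:stasyncable}(i) together with the elementary observation that, as $s$ ranges over $\RR$ and $K$ over $\RR^{m\times q}$, the matrix $A - sBKC$ ranges over exactly the same family as $A - BK'C$ for $K' \in \RR^{m\times q}$ (the value $s=0$ only making the condition easier to meet). The one point worth flagging is the degenerate case $s_0 = 0$, where one cannot simply rescale the gain and must instead note directly that $A$ is already Hurwitz.
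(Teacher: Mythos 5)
Your proof is correct and follows essentially the same route as the paper: both directions reduce to Lemma \ref{lem:stasyncable}(i), using $1\in\syncregion{\system}{K}\cap\RR$ for sufficiency and rescaling the gain to $s_0K$ for necessity. The case split at $s_0=0$ is harmless but unnecessary, since $A - B(s_0K)C = A - s_0BKC$ holds identically, including when $s_0=0$.
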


\begin{proof}
	(Sufficiency)
	Output-feedback stabilizability of $\system$ guarantees the existence of real matrix $K$ such that $A - BKC$ is Hurwitz. Hence, $\{1\} \subset \syncregion{\system}{K}$ and, by Lemma \ref{lem:stasyncable}, $\system^{N}$  is OFS.
	
	(Necessity)
	Since $N$ is even and  $\system^{N}$ is \change{OFS}, by Lemma \ref{lem:stasyncable}, there exists $K$ such that $\left(\syncregion{\system}{K} \cap \RR\right) \neq \emptyset$. Therefore, there exists a real number $p$ such that $A - pBKC$ is Hurwitz. We conclude that $\system$ is output-feedback stabilizable with feedback matrix $pK$.
\end{proof}


Notice that in the first part of the proof of Theorem \ref{thm:stasyncableeven} we do not use the assumption that $N$ is even, therefore, if $\system$ is output-feedback stabilizable, $\system^{N}$ is output-feedback synchronizable for all $N$. \change{The converse also holds as a special case of Theorem \ref{thm:stasyncableeven}.} This observation is summarized in the following \change{Proposition}.
\begin{proposition} \label{corol1}
$\system^{N}$ is OFS for all $N$ if and only if $\system$ is output-feedback stabilizable.
\end{proposition}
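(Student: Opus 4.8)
The plan is to prove Proposition \ref{corol1} by combining the two directions that have essentially already been established in the excerpt, rather than starting from scratch. The statement asserts that $\system^{N}$ is OFS for \emph{all} $N$ if and only if $\system$ is output-feedback stabilizable, so the proof decomposes naturally into a ``necessity'' direction (OFS for all $N$ implies stabilizability) and a ``sufficiency'' direction (stabilizability implies OFS for all $N$).

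For sufficiency, I would simply invoke the observation made just before the proposition: the argument in the first part of the proof of Theorem \ref{thm:stasyncableeven} never used that $N$ is even. Concretely, if $\system$ is output-feedback stabilizable, there is a real $K$ with $A - BKC$ Hurwitz, hence $1 \in \syncregion{\system}{K}$, so $\syncregion{\system}{K} \cap \RR \neq \emptyset$ and also $\syncregion{\system}{K} \neq \emptyset$; by either case of Lemma \ref{lem:stasyncable} (the even case uses the first, the odd case uses the second), $\system^{N}$ is OFS for every $N$. This direction is routine and takes only a sentence or two.

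For necessity, the cleanest route is to specialize the hypothesis to a single even value of $N$. If $\system^{N}$ is OFS for all $N$, then in particular $\system^{2}$ is OFS (or any fixed even $N$), and $N=2$ is even, so by the necessity part of Theorem \ref{thm:stasyncableeven} the system $\system$ is output-feedback stabilizable. That is the whole argument: the ``for all $N$'' hypothesis is far stronger than needed, and one only needs a single even instance to trigger Theorem \ref{thm:stasyncableeven}.

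I do not anticipate a genuine obstacle here, since Proposition \ref{corol1} is explicitly framed in the text as a corollary-style restatement of Theorem \ref{thm:stasyncableeven} together with the remark that the sufficiency argument is $N$-independent. The only thing to be careful about is phrasing: one should make explicit that necessity follows by instantiating at an even $N$ (so that Theorem \ref{thm:stasyncableeven} applies), and that sufficiency genuinely yields OFS for \emph{all} $N$ — including odd $N$ — which is why one cites Lemma \ref{lem:stasyncable} in both its even and odd forms rather than Theorem \ref{thm:stasyncableeven} alone. So the proof I would write is essentially: ``(Sufficiency) If $\system$ is output-feedback stabilizable, pick $K$ with $A-BKC$ Hurwitz; then $1 \in \syncregion{\system}{K}$, so $\syncregion{\system}{K} \neq \emptyset$ and $\syncregion{\system}{K}\cap\RR \neq \emptyset$; by Lemma \ref{lem:stasyncable}, $\system^{N}$ is OFS for every $N$. (Necessity) In particular $\system^{2}$ is OFS, and since $2$ is even, Theorem \ref{thm:stasyncableeven} gives that $\system$ is output-feedback stabilizable.''
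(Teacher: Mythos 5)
Your proof is correct and matches the paper's own argument: sufficiency is exactly the observation that $1 \in \syncregion{\system}{K}$ together with Lemma \ref{lem:stasyncable} (the evenness of $N$ being unused), and necessity is obtained by specializing to an even $N$ and invoking Theorem \ref{thm:stasyncableeven}. No issues.
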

It turns out, as illustrated in the following example, that output-feedback stabilizability is not necessary to achieve synchronization if $N$ is odd.
\begin{example}\label{ex:odd}
	Consider the system $\system = (A,B,C)$ defined by
	\[
		A =
		\begin{bmatrix}
		0			& 1			& 0			& 0			\\
		0			& 0			& 1			& 0			\\
		0			& 0			& 0			& 1			\\
		1			& 11		& 9			& 8			\\
		\end{bmatrix}, \quad
		B =
		\begin{bmatrix}
		0			\\
		0			\\
		0			\\
		1			\\
		\end{bmatrix}, \quad
		C =
		\begin{bmatrix}
		0			& 6			& 6			& 6			\\
		\end{bmatrix}. 
	\]
	
The characteristic polynomial of $A - BKC$ is $\chi_{A - BKC} (\lambda) = \lambda^4 + (6K - 8) \lambda^3 + (6K - 9) \lambda^2 + (6K - 11) \lambda - 1$. By the Routh-Hurwitz criterion, $A - BKC$ is non Hurwitz regardless of the choice of $K \in \RR$ and therefore $\system$ is not output-feedback stabilizable. By Theorem \ref{thm:stasyncableeven}, $\system^{N}$ is not OFS if $N$ is even. However, setting $K = 1$ results in a non-empty synchronization region $\syncregion{\system}{1}$ (see Fig. \ref{fig:ex3syncregion}). From Lemma \ref{lem:stasyncable} we conclude that $\system^{N}$ is OFS if $N$ is odd. As an example, we can fix $N=3$, $K = 1$ and the graph $\graph$ shown in Fig. \ref{fig:ex3syncregion}. The spectrum of the reduced interconnection matrix associated to the graph $\graph$ is $\spectrum{\intmatrixred{\graph}} = \{\left(3 \pm i\sqrt{3}\right)/2\} \subset \syncregion{\system}{1}$ and, from Theorem \ref{thm:synccritsta}, we conclude that the network $\network =(\system^{3},1,\graph)$ synchronizes. 
\end{example}

\begin{figure}[t]
	\centering
	\includegraphics[width=5.5cm]{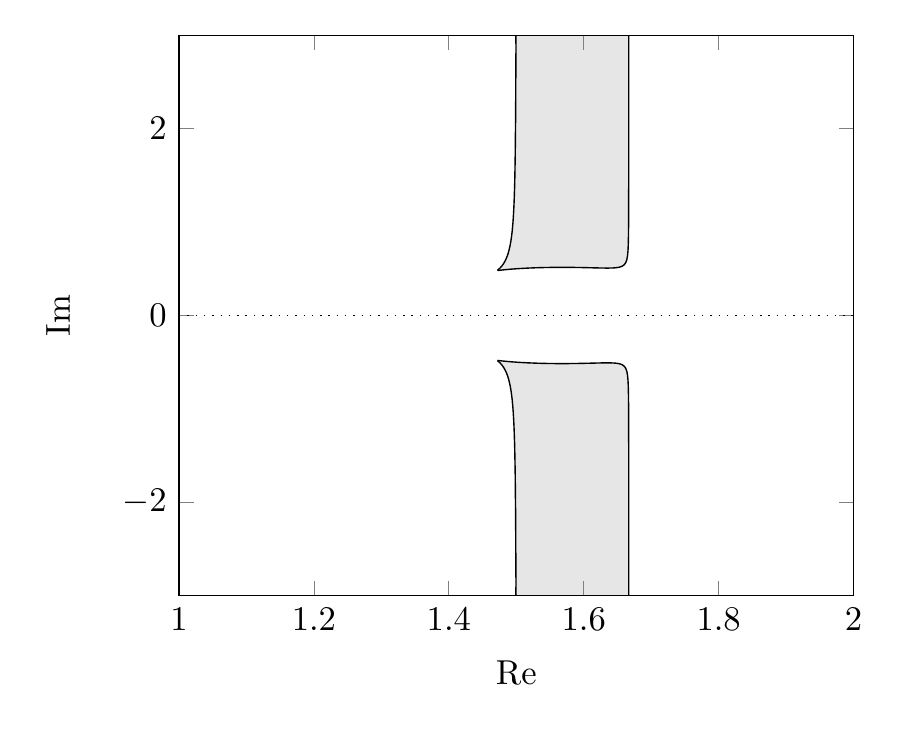}
 \quad \quad\includegraphics[width=5 cm]{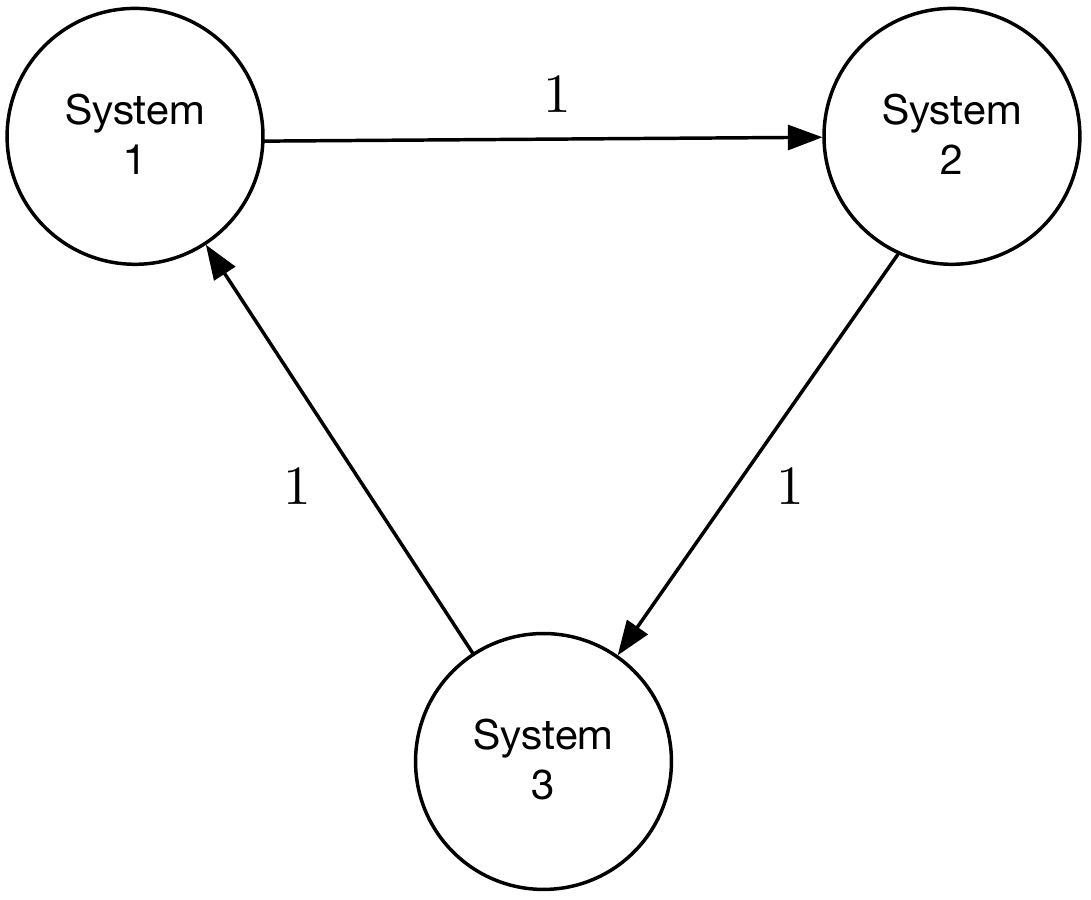}
	\caption{Left: Synchronization region of Example \ref{ex:odd} ($K =1$). Right: Circulant interconnection topology used in Example \ref{ex:odd}.
	} 
	\label{fig:ex3syncregion}
	\vspace {-0.2 cm}
\end{figure}

The main theorem in \cite{kucera95} provides a sufficient and necessary condition for output-feedback stabilizability of $\system$. 
In the next Theorem we generalize those conditions to obtain a sufficient and necessary condition for \change{output-feedback} synchronizability of $\system^N$, when $N$ is odd. We first present a lemma required in the proof. 

\begin{lemma} \label{lemma:hurwitz_alt}
	Let $A, H \in \CC^{n \times n}$ where $H$ is positive-semidefinite and $\Ker H$ does not contain any eigenvector of $A$ corresponding to an eigenvalue in $\CCge{0}$. The matrix $A$ is Hurwitz if and only if there exists positive-semidefinite $P\in \CC^{n \times n}$ such that $A^*P + PA = -H$.
	If $A$ and $H$ are real matrices, then $P$ can be chosen to be real.
\end{lemma}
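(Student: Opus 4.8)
The plan is to prove both directions of the equivalence using a Lyapunov-type argument, where the hypothesis on $\Ker H$ is precisely what one needs to recover the classical Lyapunov theorem without requiring $H>0$.

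For the ``if'' direction, suppose there exists a positive-semidefinite $P$ with $A^*P + PA = -H$. I would argue by contradiction: let $\lambda$ be an eigenvalue of $A$ with $\re{\lambda} \ge 0$ and let $v \ne 0$ be a corresponding eigenvector, so $Av = \lambda v$. Multiplying the Lyapunov equation on the left by $v^*$ and on the right by $v$ gives $v^*A^*Pv + v^*PAv = (\bar\lambda + \lambda) v^*Pv = 2\re{\lambda}\, v^*Pv = -v^*Hv$. Since $P \ge 0$ and $\re{\lambda}\ge 0$, the left side is $\ge 0$; since $H \ge 0$, the right side is $\le 0$. Hence both are zero, so $v^*Hv = 0$, which for a positive-semidefinite $H$ forces $Hv = 0$, i.e. $v \in \Ker H$. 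But $v$ is an eigenvector of $A$ for an eigenvalue in $\CCge{0}$, contradicting the hypothesis. Therefore every eigenvalue of $A$ has negative real part, i.e. $A$ is Hurwitz.

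For the ``only if'' direction, suppose $A$ is Hurwitz. Then the standard construction applies: set $P := \int_0^\infty e^{A^*t} H e^{At}\, dt$, which converges because $A$ is Hurwitz and is manifestly positive-semidefinite (it is an integral of matrices of the form $M^*HM$ with $H\ge 0$). A direct computation, differentiating $e^{A^*t}He^{At}$ and using $e^{A^*t}He^{At}\to 0$ as $t\to\infty$, shows $A^*P + PA = -H$. The hypothesis on $\Ker H$ is not needed here. For the real case: if $A$ and $H$ are real, then in the ``only if'' direction $P$ as defined by the integral is automatically real; in the ``if'' direction there is nothing to construct, so the final sentence of the lemma is covered by the integral formula.

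I expect the only subtle point to be the step ``$H \ge 0$ and $v^*Hv = 0$ imply $Hv = 0$,'' which I would justify by writing $H = H^{1/2}H^{1/2}$ (or using a Cholesky/spectral factorization $H = G^*G$) so that $v^*Hv = \|H^{1/2}v\|^2 = 0$ gives $H^{1/2}v = 0$ and hence $Hv = 0$; this also needs $H$ to be Hermitian, which is part of being positive-semidefinite. The rest is routine, so the main thing to be careful about is simply stating the eigenvector argument cleanly and invoking the hypothesis on $\Ker H$ at exactly the right moment.
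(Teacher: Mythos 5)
Your proposal is correct and follows essentially the same route as the paper: the integral $P=\int_0^\infty e^{A^*t}He^{At}\,dt$ for the Hurwitz-implies-$P$ direction, and the eigenvector/contradiction argument forcing $v^*Hv=0$ and hence $v\in\Ker H$ for the converse. The only cosmetic difference is how the step ``$H\ge 0$ and $v^*Hv=0$ imply $Hv=0$'' is justified (you use the factorization $H=H^{1/2}H^{1/2}$, the paper perturbs along $v+tHv$ and differentiates), and you additionally spell out the realness of $P$, which the paper leaves implicit.
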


\begin{proof}
	\change{
	(Necessity)
	Let 
	\[
	P := \int_{0}^{\infty} e^{A^*t} H e^{At} dt.
	\]
	$P$ is well-defined because the integrand is absolutely integrable. Since $H$ is positive-semidefinite $P$ is also positive-semidefinite and
	\begin{equation*}
		\frac{d}{dt} e^{A^*t} H e^{At} = A^* \left(e^{A^*t} H e^{At}\right) + \left(e^{A^*t} H e^{At} \right)A.
	\end{equation*}
	Integrating both sides from zero to infinity yields $-H = A^* P + PA$.
	
	(Sufficiency)
	We proceed by contradiction. Assume that $A$ is not Hurwitz, then there exists eigenvalue $\lambda \in \CCge{0}$ and eigenvector $v$ such that $Av = \lambda v$. Hence, $v^*Hv = -v^*(A^*P+PA)v = -2 \re{\lambda} v^*Pv \leq 0$. Positive-semidefiniteness of $H$ implies that $v^*Hv = 0$. Define $\phi(t) = (v + tHv)^* H (v + tHv)$ for $t \in \RR$ and notice that $\phi(0) = 0$. By positive-semidefiniteness of $H$, $\phi(t) \geq 0$. Therefore, the derivative $\phi'(0) = 2(Hv)^*(Hv) = 0$. We conclude that $v \in \Ker H$, which contradicts the hypothesis.
	}  
\end{proof}

\begin{theorem} \label{thm:stasyncableodd}
If $N$ is odd, $\system^{N}$ is OFS if and only if
	\begin{enumerate}
	\item[i)] $(A,B)$ is stabilizable;
	\item[ii)] $(A,C)$ is detectable;
	\item[iii)] There exist real matrix $K$, complex number $s$, and Hermitian positive-semidefinite $P$ such that 
		\begin{equation} \label{eq:lyap}
			\left (A - sBKC\right)^* P + P \left(A - sBKC\right) + C^T C + C^T K^T KC = 0. 
		\end{equation}
	\end{enumerate}
\end{theorem}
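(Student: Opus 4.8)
The plan is to combine Lemma \ref{lem:stasyncable}(ii), which reduces output-feedback synchronizability for odd $N$ to the existence of $K$ with $\syncregion{\system}{K} \neq \emptyset$, with Theorem \ref{thm:ctrobs_subsys}, which shows that a non-empty synchronization region forces stabilizability and detectability and depends only on the controllable/observable subsystem. Thus conditions i) and ii) are immediately necessary, and the whole content of the theorem is to show that, given i) and ii), the existence of $s$ and $K$ with $A - sBKC$ Hurwitz is equivalent to the solvability of the Lyapunov equation \eqref{eq:lyap}. This is where Lemma \ref{lemma:hurwitz_alt} enters: the specific right-hand side $H := C^T C + C^T K^T K C = C^T (I + K^T K) C$ is positive-semidefinite, so the lemma says $A - sBKC$ is Hurwitz iff \eqref{eq:lyap} has a positive-semidefinite solution \emph{provided} the kernel condition holds, namely that $\Ker H$ contains no eigenvector of $A - sBKC$ associated with an eigenvalue in $\CCge{0}$.

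First I would handle the easy direction: if i), ii), iii) all hold, then by Lemma \ref{lemma:hurwitz_alt} applied to $A' := A - sBKC$ and $H$ as above — after checking the kernel hypothesis — we get that $A - sBKC$ is Hurwitz, so $s \in \syncregion{\system}{K}$, the region is non-empty, and Lemma \ref{lem:stasyncable}(ii) gives OFS. For the converse, if $\system^N$ is OFS with $N$ odd, Lemma \ref{lem:stasyncable}(ii) gives $K_0$ and $s_0$ with $A - s_0 B K_0 C$ Hurwitz; then Theorem \ref{thm:ctrobs_subsys} forces i) and ii); and to get iii) I would set $K := K_0$, $s := s_0$, and define $P := \int_0^\infty e^{(A - sBKC)^* t} H e^{(A-sBKC) t}\, dt$, which converges since $A - sBKC$ is Hurwitz, is positive-semidefinite, and satisfies \eqref{eq:lyap} by the standard differentiation-under-the-integral computation (exactly as in the necessity part of Lemma \ref{lemma:hurwitz_alt}). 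So the converse is essentially free once the key reductions are in place.

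The main obstacle — and the step deserving genuine care — is verifying the kernel hypothesis of Lemma \ref{lemma:hurwitz_alt} in the sufficiency direction, i.e. that $\Ker(C^T(I + K^T K)C)$ contains no eigenvector of $A - sBKC$ with eigenvalue in $\CCge{0}$. Since $I + K^T K > 0$, we have $\Ker H = \Ker C$. So the requirement is: no eigenvector $v$ of $A - sBKC$ with $Cv = 0$ has $\re(\lambda) \geq 0$. But if $Cv = 0$ then $(A - sBKC)v = Av$, so such a $v$ would be an eigenvector of $A$ in $\Ker C$ with eigenvalue in $\CCge{0}$ — which is precisely ruled out by detectability of $(A,C)$ (condition ii). Hence the kernel hypothesis is automatically satisfied whenever ii) holds, which is why ii) is listed as a standing assumption rather than folded into iii); this is the one point where the argument is not purely formal and where the precise choice of $H$ (making $\Ker H = \Ker C$) is what makes everything click. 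I would state this observation explicitly as the crux of the proof.

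I would also remark that, because $A$, $B$, $C$, $K$ are real, one can take $P$ real by the last sentence of Lemma \ref{lemma:hurwitz_alt} when $s$ happens to be real; in general $s$ may be complex (this is the whole point of the odd case, cf. Example \ref{ex:odd}), so $A - sBKC$ is genuinely complex and $P$ is only Hermitian, consistent with the statement of iii).
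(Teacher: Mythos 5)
Your proposal is correct and follows essentially the same route as the paper: reduce to non-emptiness of the synchronization region via Lemma \ref{lem:stasyncable}(ii), get i) and ii) from Theorem \ref{thm:ctrobs_subsys}, and use Lemma \ref{lemma:hurwitz_alt} with $H = C^T(I+K^TK)C$ for sufficiency and the standard integral Lyapunov solution for necessity. Your explicit verification of the kernel hypothesis ($\Ker H = \Ker C$ plus the detectability/PBH argument) just spells out what the paper summarizes as ``detectability is unaffected by output-feedback.''
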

\begin{proof}
	(Sufficiency)
	Choose $K$, $P$, and $s$ such that  \refe{eq:lyap} is satisfied. Define $H := C^T C + C^TK^T KC$ and rewrite \refe{eq:lyap} as
	\begin{equation} \label{eq:lyap2}
	\left (A - sBKC\right)^* P + P \left(A - sBKC\right) = -H. 
	\end{equation}
	Since $(A, C)$ is detectable and detectability is unaffected by output-feedback, the kernel of $H$ does not contain any eigenvector associated to an eigenvalue with nonnegative real part of $A-sBKC$. Therefore $A-sBKC$ and $H$ satisfy the assumptions of Lemma \ref{lemma:hurwitz_alt}. Since $P$ in \refe{eq:lyap2} is positive-semidefinite, by Lemma \ref{lemma:hurwitz_alt}, the matrix $A-sBKC$ is Hurwitz. By Lemma \ref{lem:stasyncable} we conclude that the collection is OFS. 

	(Necessity)
	Since the collection is \change{OFS}, Lemma \ref{lem:stasyncable} guarantees the existence of $K$ such that $\syncregion{\system}{K} \neq \emptyset$.	By Theorem \ref{thm:ctrobs_subsys}, $\system$ is stabilizable and detectable. It remains to prove iii). Choose $s \in \CC$ such that $A - sBKC$ is Hurwitz. Define the positive-semidefinite and Hermitian matrix $H = C^T C + C^T K^T KC$. Then, from a standard Lyapunov argument, there exists a Hermitian positive-semidefinite $P$ such that	$(A-sBKC)^*P + P(A-sBKC) + H = 0$.
\end{proof}

\change{Theorem \ref{thm:stasyncableeven} and Theorem \ref{thm:stasyncableodd} provide a complete characterization of output-feedback synchronizability for even and odd collections respectively. We end this section with a final remark.
\begin{remark}\label{rem:state1}
	In the case of state feedback, i.e. when  $C = I_n$, \change{output-feedback} synchronizability does not depend on the number of systems in the collection and it is equivalent to stabilizability. This follows from the following simple argument.
	Stabilizability of $\system$ implies the existence of state feedback $K$ such that $A - BK$ is Hurwitz. Therefore, from \change{Proposition \ref{corol1}}, $\system^N$ is \change{OFS} for all $N$. Stabilizability is also necessary by Theorem \ref{thm:ctrobs_subsys}.  
\end{remark}}

\section{SISO Systems} \label{sect:siso_nyquist}

In this section we particularize our results to networks of SISO systems. 
Given a SISO system $\system = (A,b,c)$, its transfer function
is $H(s) = c(sI - A)^{-1}b$. In assuming that there is no throughput we have made the restriction to strictly proper transfer functions (the relative degree of $H$ is at least one). 

The Nyquist contour $\gamma : [-\infty, \infty] \rightarrow \CC$ is the oriented curve defined by $\gamma(\omega) = H(i\omega)$\footnote{The definition of $\gamma$ extends to $\pm \infty$ by continuity (i.e. $\gamma(\pm \infty) = 0$ when $H(s)$ is a strictly proper rational function)}. 
The winding number\footnote{The winding number of an oriented curve around a point is the number of counterclockwise rotations of the curve around the point.} of $\gamma$ around a point $s \in \CC$ is denoted $\winding{\gamma}{s}$. 
As routinely done in frequency domain analysis, if  $s_i=i\omega_i$ are poles of $H(s)$ on the imaginary axis we define
\begin{equation*} 
	\gamma_\epsilon(\omega) =
	\begin{cases}
		H(i\omega_i + \epsilon e^{\frac{i\pi}{2\epsilon}(\omega - \omega_i)}), & \|\omega - \omega_i\| < \epsilon, \\
		H(i\omega), & \mbox{otherwise},
	\end{cases}
\end{equation*}
and 
\begin{equation*} 
	\winding{\gamma}{s} := 
	\lim_{\epsilon \rightarrow 0^+} \winding{\gamma_\epsilon}{s}.
\end{equation*}

\begin{definition} \label{defn:nyquistregion}
	Given a system $\system$ with transfer function $H(s)$ and Nyquist contour $\gamma$, the \emph{stable Nyquist region} is
	\begin{equation*}
		\nyquistregion{\system} = \{ s \in \CC \bigm| \winding{\gamma}{s} = p^+ \},
	\end{equation*}
	where $p^+$ denotes the number of poles of $H(s)$ in $\CCg{0}$.
\end{definition}

Notice that $A - skbc$ is Hurwitz if and only if all the poles of $T= 1/(1+skH)$ are in $\CCl{0}$ which, by the Nyquist criterion, is equivalent to the condition $-1/{sk} \in \nyquistregion{\system}$. \change{This observation establishes a bijection between $\syncregion{\system}{k}$ and $\nyquistregion{\system}$.}

\change{\begin{lemma} \label{lem:syncnyquist}
	Let $\system$ be a minimal SISO system. For every $k \neq 0$,
	\begin{align*}
	\syncregion{\system}{k} \setminus \{0\} &= \left\{ -\frac{1}{sk} \bigm| s \in \nyquistregion{\system} \setminus \{0\} \right\}.
	\end{align*}
\end{lemma}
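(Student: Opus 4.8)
The plan is to take the observation recorded immediately before the lemma — that for $s\in\CC$, $A-skbc$ is Hurwitz if and only if all poles of $T=1/(1+skH)$ lie in $\CCl{0}$, which by the Nyquist criterion is equivalent to $-\tfrac{1}{sk}\in\nyquistregion{\system}$ — and to repackage it as the claimed set identity by means of the coordinate change $s\mapsto -\tfrac{1}{sk}$.

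First I would recall that, by Definition \ref{defn:syncregion_sta}, $\syncregion{\system}{k}=\{s\in\CC\mid A-skbc\text{ is Hurwitz}\}$, and that $0$ is excluded from both sides: on the left because $A$ itself need not be Hurwitz, and on the right because $-\tfrac{1}{sk}$ is never $0$ and because $\phi$ below is only defined away from the origin. For $s\neq 0$ I would spell out the key equivalence: writing $H=\beta/\alpha$ with $\alpha(\cdot)=\det(\cdot\,I-A)$, minimality of $\system$ gives $\gcd(\alpha,\beta)=1$, so that $T=\alpha/(\alpha+sk\beta)$ exhibits no pole-zero cancellation and its poles are exactly the roots of $\alpha+sk\beta=\det(\cdot\,I-(A-skbc))$, i.e. exactly $\spectrum{A-skbc}$. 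Hence $A-skbc$ is Hurwitz iff all poles of $T$ lie in $\CCl{0}$, which by Nyquist is equivalent to $\winding{\gamma}{-1/(sk)}=p^{+}$, i.e. $-\tfrac{1}{sk}\in\nyquistregion{\system}$. This is the only place minimality is needed, and it is what makes ``poles of $T$'' literally the same multiset as $\spectrum{A-skbc}$.

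Next I would introduce $\phi:\CC\setminus\{0\}\to\CC\setminus\{0\}$, $\phi(s):=-\tfrac{1}{sk}$, note that it is a bijection with $\phi\circ\phi=\mathrm{id}$ (so $\phi^{-1}=\phi$), and rephrase the equivalence above as: for $s\neq 0$, $s\in\syncregion{\system}{k}$ iff $\phi(s)\in\nyquistregion{\system}$. Since $\phi(s)\neq 0$ always, this is the same as $\phi(s)\in\nyquistregion{\system}\setminus\{0\}$, so
\[
\syncregion{\system}{k}\setminus\{0\}=\phi^{-1}\!\left(\nyquistregion{\system}\setminus\{0\}\right)=\phi\!\left(\nyquistregion{\system}\setminus\{0\}\right)=\left\{-\frac{1}{sk}\ \bigm|\ s\in\nyquistregion{\system}\setminus\{0\}\right\},
\]
which is the desired identity. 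The only real care needed is this bookkeeping around $0$ and the correct invocation of minimality; beyond that the argument is a one-line substitution, so I do not anticipate a substantive obstacle.
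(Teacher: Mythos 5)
Your proposal is correct and follows essentially the same route as the paper, which proves the lemma only through the observation stated just before it (that $A-skbc$ is Hurwitz iff the poles of $T=1/(1+skH)$ lie in $\CCl{0}$, iff $-1/(sk)\in\nyquistregion{\system}$) and the substitution $s\mapsto -1/(sk)$. Your write-up merely makes explicit what the paper leaves implicit — minimality ruling out pole-zero cancellation so that the poles of $T$ coincide with $\spectrum{A-skbc}$, and the bookkeeping of the point $0$ under the involution $\phi$ — which is a faithful elaboration rather than a different argument.
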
}

\change{In view of Lemma \ref{lem:syncnyquist}, Theorem \ref{thm:synccritsta} specializes to the following SISO counterpart, which was proven in \cite{Fax:2004ua} using a different argument.}  
\begin{theorem} (Nyquist Synchronization Criterion) \label{thm:nyquistsync}
	Let $\system = (A,b,c)$ be a minimal realization of the strictly proper transfer function $H(s)$, then the network $\network = (\system, k, \graph)$ synchronizes if and only if 
	\[
	-\frac{1}{k\lambda_i} \in \nyquistregion{\system}, \quad i=1,\ldots,N-1, 
	\]
	where $\lambda_i$,  $i=1,\ldots,N-1$, are the eigenvalues of $\intmatrixred{\graph}$.  	
\end{theorem}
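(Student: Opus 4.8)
The plan is to derive this statement as a direct corollary of Theorem \ref{thm:synccritsta} combined with the dictionary between the synchronization region and the stable Nyquist region recorded just above (and formalized in Lemma \ref{lem:syncnyquist}). First I would apply Theorem \ref{thm:synccritsta} with the scalar gain $K=k$: the network $\network = (\system, k, \graph)$ synchronizes if and only if $\spectrum{\intmatrixred{\graph}} \subseteq \syncregion{\system}{k}$, i.e. if and only if $A - \lambda_i k bc$ is Hurwitz for every eigenvalue $\lambda_i$, $i = 1, \ldots, N-1$, of $\intmatrixred{\graph}$.

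Next I would translate the Hurwitz condition on each block into the frequency domain. For $\lambda_i \neq 0$, minimality of $\system$ guarantees that the poles of $T = 1/(1 + \lambda_i k H)$ coincide with the eigenvalues of $A - \lambda_i k bc$ (no pole--zero cancellation), so by the Nyquist criterion $A - \lambda_i k bc$ is Hurwitz if and only if $-1/(\lambda_i k) \in \nyquistregion{\system}$ --- this is exactly the equivalence underlying Lemma \ref{lem:syncnyquist}. Substituting this equivalence, eigenvalue by eigenvalue, into the characterization obtained from Theorem \ref{thm:synccritsta} yields the asserted condition $-1/(k\lambda_i) \in \nyquistregion{\system}$ for $i = 1, \ldots, N-1$.

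The only point deserving a word of care is a possible zero eigenvalue of $\intmatrixred{\graph}$, which occurs precisely when the zero eigenvalue of $\intmatrix{\graph}$ fails to be simple. For such a $\lambda_i = 0$ one has $A - \lambda_i k bc = A$, so synchronization forces $A$ to be Hurwitz; correspondingly $-1/(\lambda_i k)$ is to be read as the point at infinity, where the Nyquist condition $\winding{\gamma}{\cdot} = p^+$ reduces to $p^+ = 0$, i.e. to $A$ being Hurwitz, so both sides still agree. (Equivalently, one may simply observe that a repeated zero eigenvalue of $\intmatrix{\graph}$ precludes synchronization unless $A$ is Hurwitz, and this degenerate case is covered by the same bookkeeping.)

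Because every step is an invocation of an already-established result (Theorem \ref{thm:synccritsta}, the Nyquist criterion, Lemma \ref{lem:syncnyquist}), I do not anticipate a genuine obstacle; the "hard part" is merely cosmetic, namely stating the $\lambda_i = 0$ boundary case in a way consistent with Definition \ref{defn:nyquistregion}.
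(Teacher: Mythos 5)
Your argument is correct and is essentially the paper's own: Theorem \ref{thm:nyquistsync} is obtained there exactly by specializing Theorem \ref{thm:synccritsta} to the scalar gain $k$ and translating the Hurwitz condition on each $A-\lambda_i kbc$ through the Nyquist criterion, i.e.\ through the bijection recorded in Lemma \ref{lem:syncnyquist}. The only quibble is your aside on $\lambda_i=0$: reading $-1/(k\lambda_i)$ as the point at infinity gives the condition $p^+=0$, which is weaker than $A$ being Hurwitz (it tolerates imaginary-axis poles), but since the statement implicitly assumes $k\lambda_i\neq 0$ this does not affect the proof.
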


\change{
It is worth noticing that previous lemma and theorem directly yield the SISO counterpart of Lemma \ref{lem:stasyncable}: a collection of SISO systems $\system^N$ is output-feedback synchronizable if and only if $(\nyquistregion{\system} \cap \RR) \neq \emptyset$ when $N$ is even (or $\nyquistregion{\system} \neq \emptyset$ when $N$ is odd).
Notice that, even in the special case of SISO systems, output-feedback synchronizability is weaker than output-feedback stabilizability, as illustrated by the following example.
}

\change{
\begin{example} \label{ex:nonpip}
	Let $\system$ be a minimal realization of the transfer function
	\begin{align*}
	H(s) = \frac{(s+2)(s-0.2)}{(s+4)(s+1)(s-2)}.
	\end{align*}
	The Nyquist contour and stable region $\nyquistregion{\system}$ are illustrated in Fig. \ref{fig:nonpip}. Since $\nyquistregion{\system}$ is non-empty, any odd collection of $\system$ is OFS. But $\nyquistregion{\system}$ does not intersect the real axis, so $\system$ is not output-feedback stabilizable.
	
	As a side remark, the system $\system$ does not satisfies the parity-interlacing property\footnote{A SISO system with transfer function $H(s)$ has the parity-interlacing property if there is an even number of poles (counting multiplicities) between any pair of zeros on $[0,\infty]$.}. So, while the parity-interlacing property is necessary for output-feedback stabilizability \cite{Syrmos:1997hz,youla1974}, this example shows that it is not necessary for output-feedback synchronizability. This is not surprising since, as we have already shown, output-feedback synchronizability is a weaker condition than output-feedback stabilizability. 
	\begin{figure}[t]
		\centering
		\includegraphics[width=7cm]{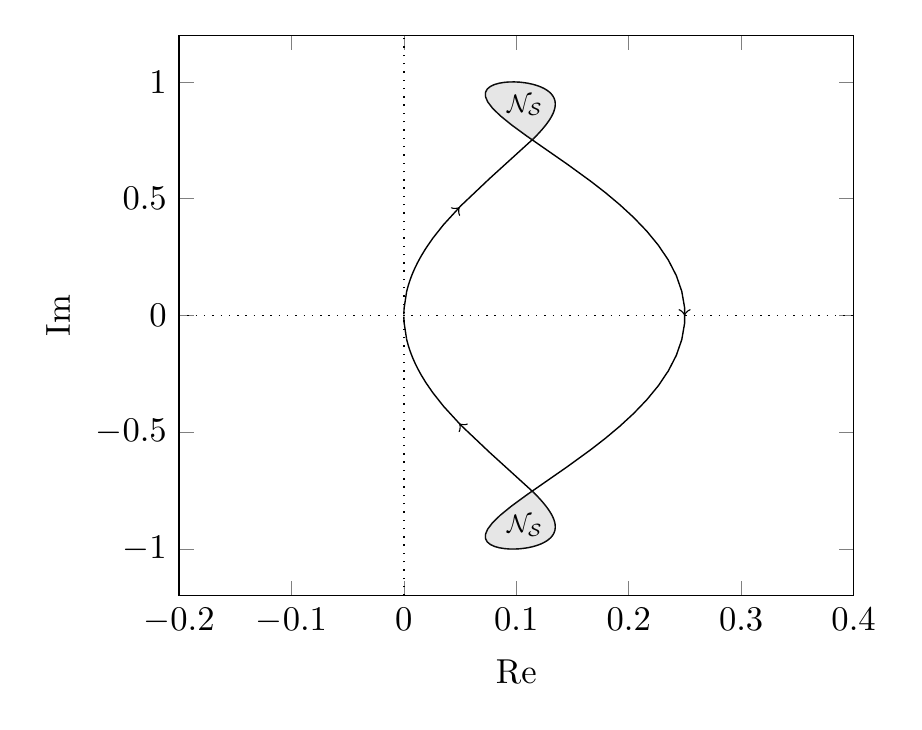}
		\caption{Nyquist contour of $\system$ in Example \ref{ex:nonpip}.}
		\label{fig:nonpip}
		\vspace{-0.4 cm}
	\end{figure}
\end{example}

The system in Example \ref{ex:nonpip} has order three. For first and second order SISO systems, output-feedback stabilizability is equivalent to output-feedback synchronizability.}
 
\begin{proposition}
\label{th:prop}
	\change{Let $\system$ be a minimal realization of a strictly proper, second order transfer function $H(s)$.} If the stable Nyquist region $\nyquistregion{\system}$ is non-empty, then $\left(\nyquistregion{\system}\cap \RR\right)  \neq \emptyset$.
\end{proposition}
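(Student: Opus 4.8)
The plan is to work entirely in the frequency domain, using the Nyquist contour $\gamma(\omega) = H(i\omega)$ of the second-order system $\system$. Write $H(s) = \dfrac{n(s)}{d(s)}$ with $\deg d = 2$ and $\deg n \le 1$ (strict properness). The key structural fact I would exploit is that, because $H$ is real-rational, the Nyquist contour is symmetric about the real axis: $\gamma(-\omega) = \overline{\gamma(\omega)}$. Consequently $\gamma$ crosses the real axis at $\omega = 0$ (where $\gamma(0) = H(0) \in \RR$, possibly $\infty$ if $s=0$ is a pole) and at any finite $\omega$ where $\Ima H(i\omega) = 0$. I want to show that the winding-number function $s \mapsto \winding{\gamma}{s}$, restricted to the real axis, actually attains the value $p^+$ somewhere, i.e. $\nyquistregion{\system} \cap \RR \ne \emptyset$.

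First I would reduce to understanding the curve $\omega \mapsto \Ima H(i\omega)$. For a second-order denominator $d(i\omega) = -\omega^2 + a_1 i\omega + a_0$ and numerator $n(i\omega) = b_1 i\omega + b_0$, a direct computation gives $\Ima H(i\omega) = \dfrac{\omega\, p(\omega^2)}{|d(i\omega)|^2}$ where $p$ is an affine polynomial in $\omega^2$; hence $\Ima H(i\omega)$ vanishes at $\omega = 0$ and at at most one positive value $\omega_0$ (and its negative). This means the Nyquist contour meets the real axis in at most two points besides the behaviour at $\omega \to \pm\infty$ (where $\gamma \to 0$). So the real axis is partitioned by these crossing point(s) into a small number of open intervals, on each of which the winding number $\winding{\gamma}{\cdot}$ is constant. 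The strategy is then: the plane is partitioned by $\gamma$ into finitely many connected regions, the unbounded one has winding number $0$, and $\nyquistregion{\system}$ is the union of regions with winding number exactly $p^+$. I must show at least one such region contains a real point.

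The main work — and the expected main obstacle — is a short case analysis on the geometry of this low-complexity curve, organized by $p^+ \in \{0,1,2\}$ (the number of open-right-half-plane poles of $H$). If $p^+ = 0$, then $\nyquistregion{\system}$ contains the unbounded region, which certainly meets $\RR$, and we are done. For $p^+ = 1$ and $p^+ = 2$, I would argue that a curve consisting of a single smooth arc (with at most one self-intersection coming from the at-most-one interior real-axis crossing, plus the closure through $\infty$) simply cannot enclose a region with winding number $p^+$ without that region touching the real axis: any bounded region cut out by $\gamma$ has boundary arcs, and because all real-axis crossings of $\gamma$ are among the at most two points $0, \pm\omega_0$, tracing the boundary of a nonempty region with winding number $p^+\ge 1$ forces it to have a real point in its interior or on a real-axis segment of its boundary not occupied by $\gamma$. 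Concretely I would use that $\winding{\gamma}{\cdot}$ changes by $\pm 1$ exactly when crossing $\gamma$ transversally, count how it evolves along the real axis from $-\infty$ (value $0$) through the crossing points, and show the running total must hit $p^+$; the bound $\deg d = 2$ is exactly what makes the number of sign changes small enough for this counting to close. I would also handle the degenerate sub-cases (a pole or zero at the origin, or $\omega_0$ coinciding with a pole location) by the usual small-semicircle indentation $\gamma_\epsilon$ already defined in the paper, checking these contribute no extra real-axis crossings in the limit. The second-order hypothesis is essential and, as Example~\ref{ex:nonpip} shows, the statement genuinely fails at order three, so the proof must use $\deg d \le 2$ in the crossing count and nowhere merely for convenience.
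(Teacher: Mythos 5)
You have a genuine gap, and it sits exactly at the decisive step. Your reductions are fine as far as they go: conjugate symmetry of $\gamma$, the computation showing $\Ima H(i\omega)=\omega\,p(\omega^2)/|d(i\omega)|^2$ with $p$ affine in $\omega^2$ (hence at most one finite real-axis crossing besides $H(0)$ and the closure point $0$), constancy of $\winding{\gamma}{\cdot}$ on the complementary real intervals, and winding $0$ on the unbounded region. But this only says the winding profile along $\RR$ is a short sequence starting and ending at $0$; nothing in your sketch connects that sequence to $p^+$, nor does it use the hypothesis $\nyquistregion{\system}\neq\emptyset$ anywhere visible. The local bookkeeping is also subtler than stated: at the crossing $H(\pm i\omega_0)$ two conjugate arcs pass through one real point and the jump is $0$ or $\pm2$, at $H(0)$ it is $\pm1$, and at the closure point the jump depends on the relative degree (for relative degree $2$ the two ends approach $0$ tangentially along $\RR$). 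For $p^+=2$, say, you must show that when the stable region is nonempty the $\pm2$ jump exists with the correct sign at the correct place --- but that is essentially the proposition itself, so ``show the running total must hit $p^+$'' begs the question. Any honest completion also needs the global fact $\winding{\gamma}{s}=p^+-Z(s)$ with $Z(s)\le 2$ RHP preimages of $s$ under $H$ (so winding values lie in $\{p^+-2,p^+-1,p^+\}$), which your plan never invokes. Two auxiliary claims are moreover shaky: self-intersections of $\gamma$ are not automatically confined to the real-axis crossings (with a zero at the origin, $H(s)=b_1 s/(s^2+a_1s+a_0)$ satisfies $H(i\omega)=H(-ia_0/\omega)$ and the contour retraces itself; in general the claim needs the small computation showing $H(i\omega_1)=H(i\omega_2)$ forces $\omega_2=-\omega_1$ when $H(0)\neq0$), and the assertion that the indentations around imaginary-axis poles ``contribute no extra real-axis crossings in the limit'' is false --- for the double integrator the limiting large arcs close through the real axis, which is precisely why $\nyquistregion{\system}$ there contains a real half-line.

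For comparison, the paper avoids all of this plane topology by going algebraic: a point $w$ of the (suitably identified) stable region makes the complex quadratic $D(s)+wN(s)$ Hurwitz; writing it as $(s+z_1)(s+z_2)$ with $\re{z_1},\re{z_2}>0$ and separating real and imaginary parts, a two-case sign analysis (on $b_1b_2\lessgtr0$, using that a real monic quadratic is Hurwitz iff both lower coefficients are positive) produces a real $k$ with $D(s)+kN(s)$ Hurwitz, i.e.\ $-1/k\in\nyquistregion{\system}$. That ``Hurwitz $\Leftrightarrow$ positive coefficients'' step is where the second-order hypothesis enters cleanly, whereas in your route the same hypothesis must be converted into a crossing-versus-$p^+$ count that is currently missing. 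If you want to keep the frequency-domain approach, you must actually prove that count (including the tangential, zero-at-origin, and imaginary-axis-pole cases); otherwise the argument is a plausible plan, not a proof.
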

\begin{proof}
	\change{Let $H(s) = N(s)/D(s)$. 
	Since $\nyquistregion{\system} \neq \emptyset$, there exists $a+ib \in \nyquistregion{\system}$ such that $P(s) = D(s) + (a+ib) N(s)$ is Hurwitz. Since $H(s)$ is a second order transfer function, $P$ has degree 2 and can be written as
	\begin{align*}
	D(s) + a N(s) + ibN(s)=&\, (s + z_1)(s + z_2) \\
	=&\,s^2 + (a_1 + a_2)s + (a_1a_2 - b_1b_2)\\ 
	&+ i\left[ (b_1+b_2)s + (a_1b_2 + a_2b_1)\right], 
	\end{align*}
	where $z_1 = a_1 + ib_1$ and $z_2 = a_2 + ib_2$.
	We therefore obtain
	\begin{align}
	bN(s) &= (b_1+b_2)s + (a_1b_2 + a_2b_1), 	\label{eq:calc1}\\
	D(s) + a N(s) &= s^2 + (a_1 + a_2)s + (a_1a_2 - b_1b_2). 	\label{eq:calc2}
	\end{align}
	Since $P$ is Hurwitz, $a_1 > 0$ and $a_2 > 0$. The following case-by-case analysis shows that $D(s) + k N(s)$ is Hurwitz for some $k \in \RR$.
	\begin{enumerate} \itemsep 0mm
		\item ($b_1b_2 \leq 0$): the coefficients of the polynomial \refe{eq:calc2} are strictly positive and therefore $D(s) + k N(s)$ is Hurwitz when $k = a$;
		\item ($b_1b_2 > 0$): the coefficients of the polynomial \refe{eq:calc1} are either all strictly positive or all strictly negative. Therefore, there exists $\bar{k} \in \RR$ such that the coefficients of the polynomial $D(s) + a N(s) + \bar{k} b N(s)$ are strictly positive. Thus $D(s) + k N(s)$ is Hurwitz when $k = a+ \bar{k}b$.
	\end{enumerate}
	By continuity, there exists $k \in \RR \setminus \{0\}$ such that $D(s) + k N(s)$ is Hurwitz, or equivalently $-1/k \in \nyquistregion{\system}$. We conclude that $\nyquistregion{\system} \cap \RR \neq \emptyset$.}
\end{proof}

\subsection{Second order systems and examples}

\change{In this section we apply our results to networks of double integrators and harmonic oscillators.}

\begin{example}[Double Integrators] 
\label{ex:siso_doubleint}
	Consider the class of double integrators described by
	\begin{equation*}
		A =
		\begin{bmatrix}
		0			& 1		\\
		0			& 0			\\
		\end{bmatrix}, \quad
		B =
		\begin{bmatrix}
		0			\\
		1			\\
		\end{bmatrix}, \quad
		C =
		\begin{bmatrix}
		c			& d \\
		\end{bmatrix}, \quad
		c \neq 0, \quad
		d \geq 0.
	\end{equation*}
The case $d\leq 0$ can be derived similarly and lead to symmetric results.   		
The transfer function is $H(s) = \left(ds+c\right)/s^2$. The system is output-feedback stabilizable if and only if $c > 0$.
By Proposition \ref{th:prop} and Lemma \ref{lem:stasyncable}, $\system^N$ is \change{OFS}   for all $N$.
\begin{figure}[t]
	\centering
	\includegraphics[width=6.cm]{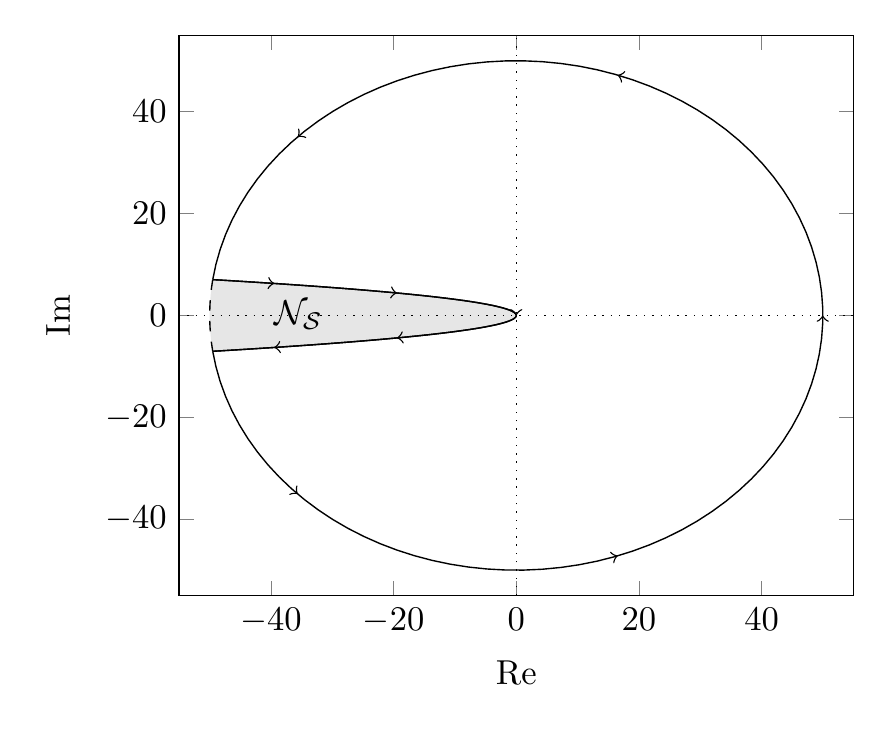} 
	\includegraphics[width=6cm]{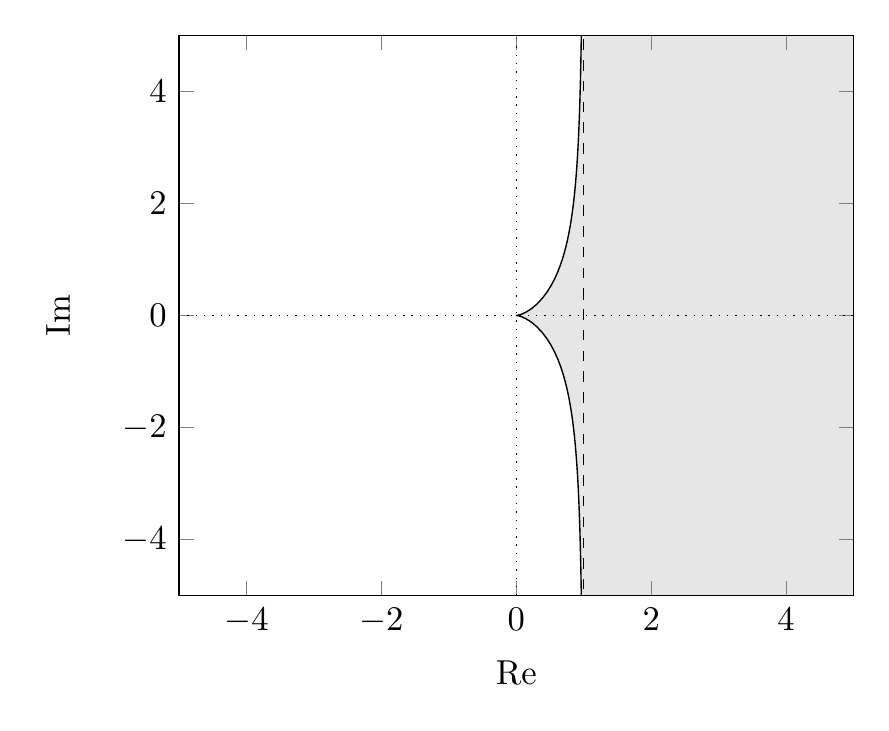} \vspace{-2mm} 
	\caption{Left: Nyquist contour of the double integrator in Example \ref{ex:siso_doubleint} for $c=d=1$. Right: Synchronization region of the double integrator in Example \ref{ex:siso_doubleint} for $c=d=1$ and $k=1$. The boundary has a vertical asymptote at $\re{s} = c/{d^2}$.}
	\label{fig:siso_doubleint}
	\vspace{-0.4 cm}
\end{figure}
The Nyquist plot and the synchronization region are shown in Fig. \ref{fig:siso_doubleint} (left). 
The existence of a vertical asymptote in the synchronization region (Fig. \ref{fig:siso_doubleint}, right) implies that, given any graph $\graph \in \graphset{N}{+}$ \change{containing a globally reachable node}, there exists a controller $k$ such that the network $(\system^{N}, k, \graph)$ synchronizes.
	
	By Theorem \ref{thm:nyquistsync}, a network $(\system^N, k, \graph)$ synchronizes if and only if $-1/(k\lambda) \in \nyquistregion{\system}$ for every $\lambda \in \spectrum{\intmatrixred{\graph}}$. From the Nyquist criterion and simple algebraic manipulations we obtain that  
	the network synchronizes if and only if  
	\begin{equation}
\frac{	\left(\im{\lambda}\right)^2}{k|\lambda|^2\re{\lambda}} < \frac{d^2}{c}, \quad k\re{\lambda}>0,
	\end{equation}
	for every $\lambda \in \spectrum{\intmatrixred{\graph}}$.
\end{example}

\begin{example}[Harmonic oscillators] \label{ex:siso_harmonic}
Consider the class of harmonic oscillators described by
	\begin{equation} \label{eq:ao}
		A =
		\begin{bmatrix}
		0			& 1		\\
		-1			& 0			\\
		\end{bmatrix}, \quad 
		B =
		\begin{bmatrix}
		0			\\
		1			\\
		\end{bmatrix}, \quad
		C =
		\begin{bmatrix}
		c			& d \\
		\end{bmatrix}, \quad
		d \geq 0.
	\end{equation}
The case $d\leq 0$ can be derived similarly and lead to symmetric results.
The transfer function is $H(s) = \left(ds+c\right)/\left(s^2+1\right)$. The system is output feedback stabilizable if and only if $d \neq 0$. The harmonic oscillator exhibits two qualitatively different Nyquist plots depending on whether $c \geq 0$ or $c < 0$. The latter corresponds to \refe{eq:ao} being non-minimum-phase.

In the case $c \geq 0$ and $d > 0$, $\nyquistregion{\system}$ contains zero as a limit point (Fig. \ref{fig:siso_harmonic}, left). The corresponding synchronization region is illustrated in Fig. \ref{fig:siso_harmonic} (right). Similar to the double integrator, $\syncregion{\system}{1}$ has a vertical asymptote. Thus, given any connected $\graph \in \graphset{N}{+}$, the network $(\system^N, k, \graph)$ synchronizes for sufficiently large $k$. From the Nyquist criterion and simple algebraic manipulations we obtain that, if $c\neq 0$, the network synchronizes if and only if 

\begin{align} \label{eq:exharmonic2}
	\frac{\left(\im{\lambda}\right)^2}{\left(\re{\lambda}\right)^2} - \frac{kd^2|\lambda|^2}{c\re{\lambda}} < \frac{d^2}{c^2}, \qquad k\re{\lambda} > 0,
\end{align}
for every $\lambda \in \spectrum{\intmatrixred{\graph}}$. If $c=0$ synchronization is obtained if and only if $k\re{\lambda} > 0$ for every $\lambda \in \spectrum{\intmatrixred{\graph}}$.  


\begin{figure}[t]
	\centering
	\includegraphics[width=6cm]{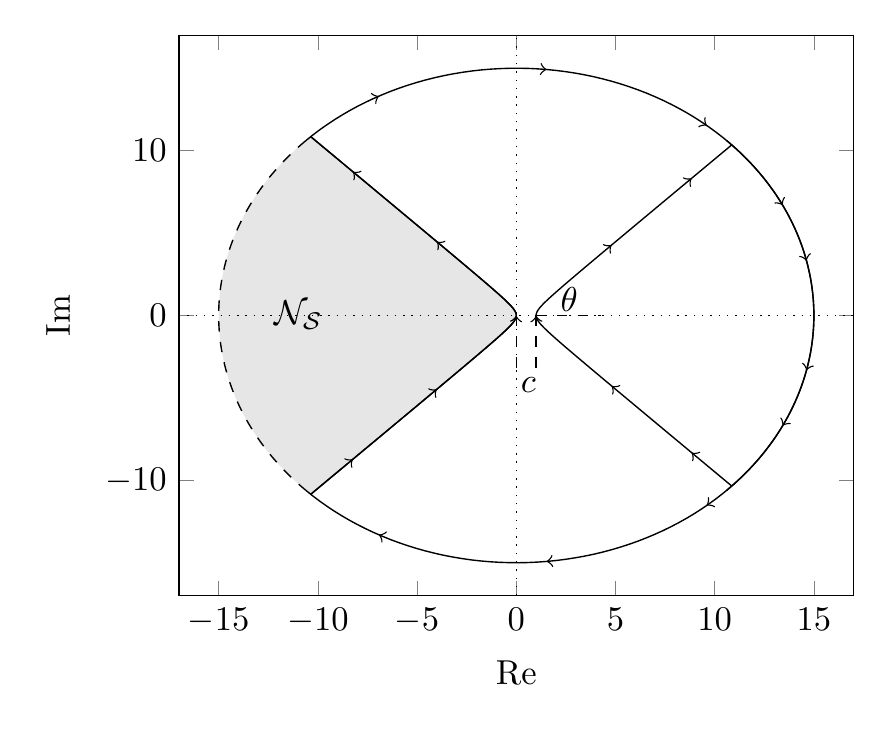}
	\includegraphics[width=6cm]{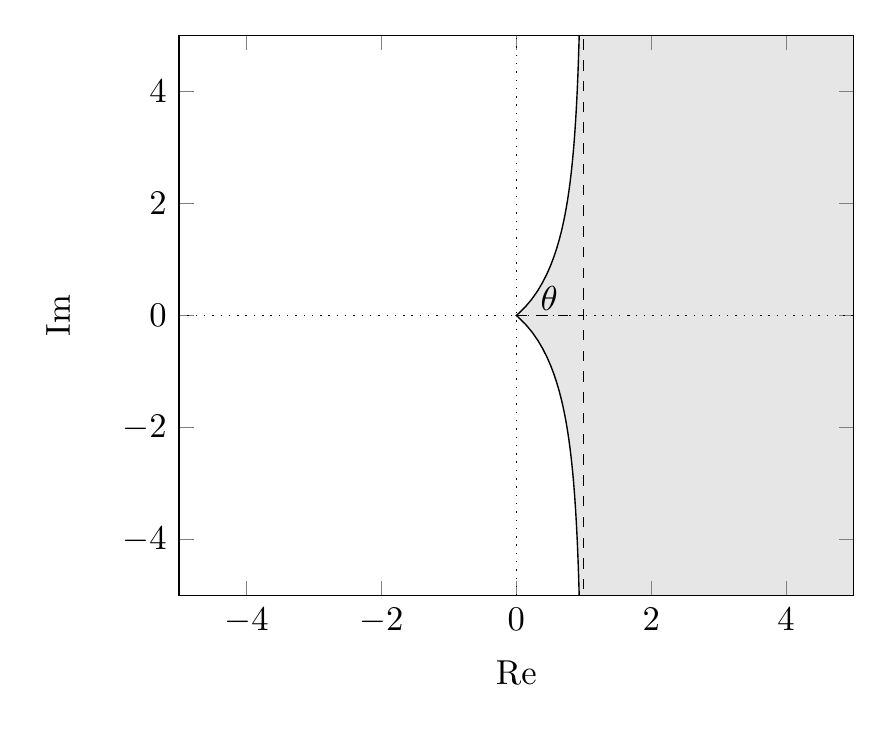}
	\caption{Left: Nyquist contour of the harmonic oscillator in Example \ref{ex:siso_harmonic} for $c=d=1$. Right: Synchronization region of the harmonic oscillator in Example \ref{ex:siso_harmonic} for $c=d=1$ and $k=1$. The vertical asymptote is located at $\re{s} = c/{d^2}$.}
	\label{fig:siso_harmonic}
	\vspace{-0.4 cm}
\end{figure}

When $c < 0$ and $d > 0$, $\nyquistregion{\system}$ is disjoint from an open neighborhood of zero (Fig. \ref{fig:siso_harmonic2}, left). Therefore, the synchronization region is a bounded subset of $\CC$ as shown in Fig. \ref{fig:siso_harmonic2} (right). Any collection $\system^N$ is OFS but the network $(\system^N, k, \graph)$ only synchronizes under weak-coupling conditions (the eigenvalues of $\intmatrixred{\graph}$ have all a small real part and $k$ is sufficiently small). 
The synchronization condition turns out to be equivalent to \refe{eq:exharmonic2}. Since $c<0$, condition \refe{eq:exharmonic2} implies that, in order for the network to synchronize, the eigenvalues of the reduced interconnection matrix must satisfy the necessary condition  
 \[
 \frac{\left(\im{\lambda}\right)^2}{\left(\re{\lambda}\right)^2}<\frac{d^2}{c^2}.
 \]

\begin{figure}
	\centering
	\includegraphics[width=6cm]{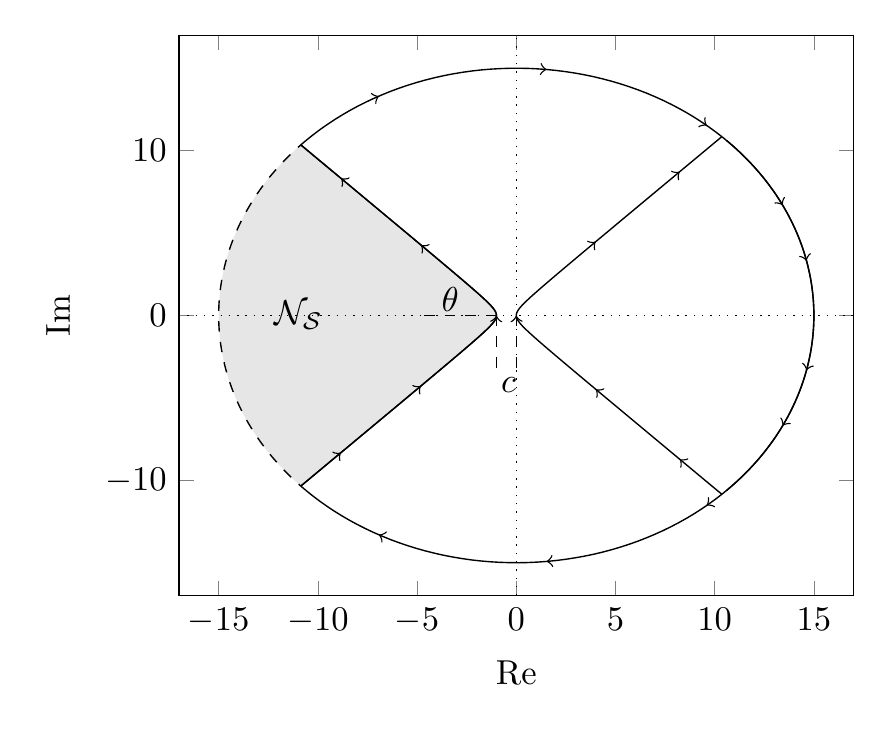}
	\includegraphics[width=6cm]{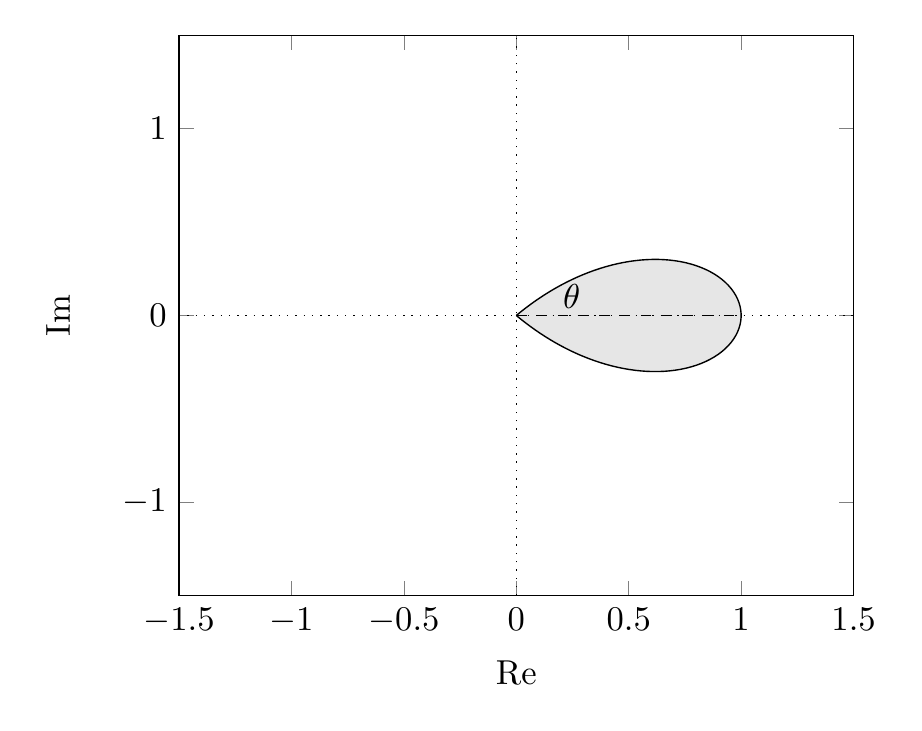}
\vspace{-2mm} 	\caption{Left: Nyquist contour of the harmonic oscillator in Example \ref{ex:siso_harmonic} for $c=-1$ and $d=1$. Right: Synchronization region of the harmonic in Example \ref{ex:siso_harmonic} for $c=-1$, $d=1$ and $k=1$. By increasing $k$ the shaded region gets smaller.}
	\label{fig:siso_harmonic2}
	\vspace{-0.4 cm}
\end{figure}
\end{example}

\vspace{-1cm}
\section{Conclusions}
\vspace{-1mm}
We addressed the problem of output-feedback synchronization for a network of LTI systems. We derived a synchronization criterion based on the notion of synchronization region and we introduced and studied \change{the notion of output-feedback synchronizability}. 
\change{In particular we have shown that a collection of output-feedback stabilizable systems is output-feedback synchronizable but output-feedback stabilizability is, in general, not necessary for output-feedback synchronizability.}
When the network is composed by SISO systems, it is shown that synchronizability is characterized by the Nyquist plot of the isolated units. 
 
In this paper we did not address the synchronization design problem, i.e., the problem of determining $K$ and $\graph$ to guarantee network synchronization. This problem remains an important direction for future work.

\bibliographystyle{IEEEtran} 
\bibliography{synlin}

\end{document}